\numberwithin{equation}{section}
\newcommand{\nchi}{{\raise.3ex\hbox{$\chi$}}}
\newcommand{\sfd}{{\sf d}}
\renewcommand{\phi}{\varphi}
\newcommand{\restr}[1]{\lower3pt\hbox{$|_{#1}$}}
\newcommand{\X}{{\rm X}}
\newcommand{\fr}{\penalty-20\null\hfill$\blacksquare$} 
\definecolor{mygray}{gray}{0.9}
\newcommand{\mea}{\mathfrak{m}}
\newcommand{\mm}{\mathfrak{m}}
\newcommand{\LIP}{{\rm LIP}}
\newcommand{\lims}{\varlimsup}
\renewcommand{\d}{{\mathrm d}}
\newcommand{\lip}{{\rm lip}}
\newcommand{\RCD}{\mathsf{RCD}}
\newcommand{\R}{\mathbb{R}}
\newcommand{\N}{\mathbb{N}}
\theoremstyle{plain}
\newtheorem{theorem}{Theorem}[section]
\newtheorem{lemma}[theorem]{Lemma}
\newtheorem{proposition}[theorem]{Proposition}
\newtheorem{cor}[theorem]{Corollary}
\newtheorem{question}[theorem]{Question}
\theoremstyle{definition}
\newtheorem{definition}[theorem]{Definition}
\newtheorem{remark}[theorem]{Remark}
\newcommand{\ezero}{E^{(0)}}
\newcommand{\eone}{E^{(1)}}
\newcommand{\Int}{{\rm int}}
\newcommand{\res}{\mathop{\hbox{\vrule height 7pt width .5pt depth 0pt
\vrule height .5pt width 6pt depth 0pt}}\nolimits} %restriction of measures
\title{Topological regularity of isoperimetric sets \\ in PI spaces having a deformation property}
\author{Gioacchino Antonelli\footnote{\href{mailto:ga2434@nyu.edu}{ga2434@nyu.edu}, Courant Institute of Mathematical Sciences (NYU), 251 Mercer Street, 10012, New York, USA.},
\,Enrico Pasqualetto\footnote{\href{mailto:enrico.e.pasqualetto@jyu.fi}{enrico.e.pasqualetto@jyu.fi}, Department of Mathematics and Statistics, P.O.\ Box 35 (MaD), FI-40014 University of Jyvaskyla, Finland.},
\,Marco Pozzetta\footnote{\href{mailto:marco.pozzetta@unina.it}{marco.pozzetta@unina.it}, Dipartimento di Matematica e Applicazioni, Universit\`{a} di Napoli Federico II, Via Cintia, Monte S.\ Angelo, 80126 Napoli, Italy.},
\,and Ivan Yuri Violo\footnote{\href{mailto:ivan.y.violo@jyu.fi}{ivan.y.violo@jyu.fi}, Department of Mathematics and Statistics, P.O.\ Box 35 (MaD), FI-40014 University of Jyvaskyla, Finland.}}
\begin{document}
\maketitle
\begin{abstract}
We prove topological regularity results for isoperimetric sets in PI spaces having a suitable deformation property, which
prescribes a control on the increment of the perimeter of sets under perturbations with balls. More precisely, we prove that
isoperimetric sets are open, satisfy boundary density estimates and, under a uniform lower bound on the volumes of unit balls,
are bounded. Our results apply, in particular, to the class of possibly collapsed $\mathrm{RCD}(K,N)$ spaces.
As a consequence, the rigidity in the isoperimetric inequality on possibly collapsed $\mathrm{RCD}(0,N)$ spaces with Euclidean
volume growth holds without the additional assumption on the boundedness of isoperimetric sets.
Our strategy is of interest even in  the Euclidean setting, as it simplifies some classical arguments.
\end{abstract}
\medskip
\noindent\textbf{MSC(2020).} Primary: 53C23, 49Q20. Secondary: 26B30, 26A45, 49J40.\\
\textbf{Keywords.} Isoperimetric set, PI space, deformation property.
\section{Introduction}

In this paper we consider length PI spaces, i.e.\ metric measure spaces $(\X,\sfd,\mm)$ where $\mm$ is a uniformly locally doubling
Borel measure, there holds a weak local $(1,1)$-Poincaré inequality (see Definition \ref{def:PI}), and the distance between any two
points $x,y$ is realized as the infimum of the lengths of curves joining $x$ and $y$. The well-established theory of $BV$ functions
on metric measure spaces \cite{amb01, MIRANDA2003} allows the treatment of sets of finite perimeter in this generalized setting.
Hence, it makes sense to consider the classical isoperimetric problem, defined by the following minimization:
\[
\inf\big\{P(E)\;\big|\;E\subseteq\X\text{ Borel, }\mm(E)=v\big\},
\]
for any assigned volume $v\in(0,\mm(\X))$, where $P(E)$ denotes the perimeter of $E$.
A set $E$ minimizing the previous infimum is called an \emph{isoperimetric set}, or an \emph{isoperimetric region}.

One of the fundamental questions about isoperimetric sets addresses their topological regularity. Namely, one aims at proving that, up to the choice of a representative,
isoperimetric sets are open, bounded and  enjoy density estimates at points of
the topological boundary. In the Euclidean space, topological regularity follows from \cite{GonzalezMassariTamanini}, subsequently
generalized in \cite{Xia05}. The proof in the Euclidean setting can be further simplified, see \cite[Example 21.3, Theorem 21.11]{MaggiBook}. On Riemannian manifolds the result is due to \cite{morgan2003regularity}.
In \cite{AntonelliPasqualettoPozzetta21} the result has been generalized to the setting of \emph{noncollapsed} $\RCD(K,N)$ spaces
$(\X,\sfd,\mathcal{H}^N)$, i.e.\ \(N\in\N\) and \(\mm\) coincides with the Hausdorff measure \(\mathcal H^N\).
We mention also \cite{KKLS13}, which addresses the case of quasi-minimal sets in PI spaces.
\medskip

The purpose of this paper is to prove the topological regularity of isoperimetric sets in the general setting of length PI spaces
that enjoy a so-called \emph{deformation property}, which we are going to introduce (we refer to Definition \ref{def:deform} for
the precise definition). We say that a metric measure space \((\X,\sfd,\mm)\) has the deformation property provided the following
holds: given a set \(E\subseteq\X\) of finite perimeter and a point \(x\in\X\), we can find \(R,C>0\) such that
\begin{equation}\label{eq:deformINTRO}
P(E\cup B_r(y))\leq C\frac{\mm(B_r(y)\setminus E)}{r}+P(E)\quad\text{ for every }y\in B_R(x)\text{ and }r\in(0,R).
\end{equation}
Classes of spaces having the deformation property are collected in Remark \ref{rmk:ex_deform_prop}. Notably, the class includes $\RCD(K,N)$ spaces $(\X,\sfd,\mm)$, thanks to \cite[Theorem 1.1]{AntonelliPasqualettoPozzetta21}. We shall not introduce
$\RCD$ spaces here, and we refer the reader to the survey \cite{AmbICM} and to the references therein.

We point out that being a PI space does not imply that the deformation property holds, see the examples in Remark \ref{rmk:example_PI_no_def} and in Remark \ref{rem:Exaple2PINoDef}. Anyway, we are not aware of any example of a PI space where the deformation
property fails when tested on an isoperimetric set $E$, nor of an example of a PI space where the essential interior an some isoperimetric set is not topologically open.
\medskip

Deformation properties for sets of locally finite perimeter are well-known in the smooth context \cite[Lemma 17.21]{MaggiBook},
and they represent a tool of crucial importance in several classical arguments. We refer, for instance, to
\cite[VI.2(3)]{AlmgrenBook}, to \cite[Lemma 4.5]{GalliRitore} and \cite[Lemma 3.6]{Pozuelo} in the sub-Riemannian setting,
and to \cite{CintiPratelli, PratelliSaracco} which study isoperimetric problems in a weighted setting.

In fact, it is mostly powerful to couple the topological regularity of an isoperimetric set, or of a set minimizing some
variational problem, with the deformation property. For instance, knowing that such a set $E$ has an open representative
allows to apply \eqref{eq:deformINTRO} centered at points $y$ in the interior, so that $\mm(E\cup B_r(y))>\mm(E)$ only
for radii $r$ sufficiently large, and thus \eqref{eq:deformINTRO} implies that one can increase the volume of $E$ controlling
the perimeter of the deformed set $E\cup B_r(y)$ \emph{linearly} with respect to the increase of mass $\mm(B_r(y)\setminus E)$.
An analogous observation holds applying \eqref{eq:deformINTRO} to the complement, in case the complement of the considered set
has an open representative. Observe that the previous improved deformation property with linear control follows from \eqref{eq:deformINTRO} only \emph{after} topological regularity of the set has been established. This is in contrast with the Euclidean setting, where the stronger form of deformation property is always available \cite[Lemma 17.21]{MaggiBook}. The latter result follows by deforming sets of finite perimeter by flows of vector fields, an argument out of reach in the metric setting. Hence the simplest Euclidean proof for the topological regularity of isoperimetric sets \cite[Example 21.3]{MaggiBook} has no hope of being performed in our framework, and we must look for an alternative argument.

\medskip

We can now state our main result, which yields the topological regularity at the more general level of \emph{volume-constrained
minimizers} of the perimeter, i.e.\ sets which minimize the perimeter with respect to any bounded variation that locally preserves
the measure, see Definition \ref{def:VolumeConstrained}.
We will denote by \(E^{(1)}\), \(E^{(0)}\), and \(\partial^e E\) the essential interior, the essential exterior, and the essential
boundary, respectively, of a Borel set \(E\subseteq\X\); see Section \ref{s:PI} for their definitions.
\begin{theorem}[Topological regularity of volume-constrained minimizers]\label{thm:main}
Let \((\X,\sfd,\mm)\) be a length PI space having the deformation property. Let \(E\subseteq\X\) be a volume-constrained minimizer
of the perimeter. Then \(E^{(1)}={\rm int}(E^{(1)})\) and \(E^{(0)}={\rm int}(E^{(0)})\). In particular, it holds that \(E^{(1)}\),
\(E^{(0)}\) are open sets and \(\partial E^{(1)}=\partial E^{(0)}=\partial^e E\).
\end{theorem}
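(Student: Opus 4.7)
The plan is to deduce the theorem from a uniform two-sided density estimate at essential-boundary points: I would prove that there exists $c\in(0,1/2)$ such that, for every $y\in\partial^e E$, some $r_0(y)>0$ satisfies
\begin{equation*}
c\,\mm(B_r(y))\le\mm(B_r(y)\cap E)\le(1-c)\,\mm(B_r(y))\qquad\text{for every }r\in(0,r_0(y)).
\end{equation*}
Granted this estimate, the conclusion follows by a standard doubling argument: if some $y\in E^{(1)}$ failed to be interior to $E^{(1)}$, one picks $z_n\to y$ with $z_n\in E^{(0)}\cup\partial^e E$, produces balls $B_{\rho_n}(z_n)$ in which $\mm(B_{\rho_n}(z_n)\setminus E)$ is a definite fraction of $\mm(B_{\rho_n}(z_n))$ (via the density estimate if $z_n\in\partial^e E$, or via the definition of $E^{(0)}$ otherwise), and transfers such a bound to $B_{2\rho_n}(y)$ by doubling, contradicting $y\in E^{(1)}$. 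The case of $E^{(0)}$ is symmetric, and $\partial E^{(1)}=\partial E^{(0)}=\partial^e E$ then follows from openness combined with the density estimate.

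For the density estimate I set $m(r):=\mm(B_r(y)\cap E)$ and aim at the differential inequality $m'(r)\ge c\,m(r)^{(Q-1)/Q}$ for a.e.\ small $r$, where $Q$ is a local doubling exponent of $\mm$; once established, this integrates to $m(r)\ge c'\,\mm(B_r(y))$, and the complementary bound on $\mm(B_r(y)\setminus E)$ follows by running the same argument on $X\setminus E$. The natural test set $E\setminus B_r(y)$ does not preserve volume, so I first restore the volume by applying the deformation property \eqref{eq:deformINTRO} at a fixed point $z_0\in\supp(\mm)$ of $E$-density zero: choosing $r'>0$ with $\mm(B_{r'}(z_0)\setminus E)=m(r)$, so that $r'\asymp m(r)^{1/Q}$ by doubling, yields
\begin{equation*}
P(E\cup B_{r'}(z_0))\le P(E)+C\,\frac{\mm(B_{r'}(z_0)\setminus E)}{r'}\le P(E)+C_1\,m(r)^{(Q-1)/Q}.
\end{equation*}
Hence the volume-preserving competitor $F:=(E\setminus B_r(y))\cup B_{r'}(z_0)$, for $r$ small enough to have the two balls disjoint, satisfies
\begin{equation*}
P(E)\le P(F)\le P(E\setminus B_r(y))+C_1\,m(r)^{(Q-1)/Q}
\end{equation*}
by the volume-constrained minimality of $E$ and subadditivity of the perimeter.

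Combining this with the slicing identity $P(E\setminus B_r(y))=P(E,X\setminus\overline{B_r(y)})+P(B_r(y),E^{(1)})$, valid for a.e.\ $r$, and the coarea-type equality $m'(r)=P(B_r(y),E^{(1)})$, gives $P(E,B_r(y))\le m'(r)+C_1\,m(r)^{(Q-1)/Q}$. Applying the relative isoperimetric inequality in PI spaces to $E\cap B_r(y)$ then yields
\begin{equation*}
c_I\,m(r)^{(Q-1)/Q}\le P(E\cap B_r(y))=P(E,B_r(y))+P(B_r(y),E^{(1)})\le 2\,m'(r)+C_1\,m(r)^{(Q-1)/Q},
\end{equation*}
and absorbing the term $C_1\,m(r)^{(Q-1)/Q}$ on the right produces the desired ODE, from which integration and the symmetric argument on $X\setminus E$ complete the density estimate.

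The hardest point will be precisely this absorption: since the far-away compensation contributes a term of the same order $m^{(Q-1)/Q}$ as the isoperimetric lower bound, the step requires a quantitative comparison $C_1<c_I$ between the deformation constants at $z_0$ and the relative-isoperimetric constant, which is not automatic. I expect to handle this either by a sharp choice of $z_0$ and a small-scale analysis of the deformation constants at Lebesgue points, or by a refined competitor construction (e.g.\ replacing the Hölder-type volume penalty $m^{(Q-1)/Q}$ by a Lipschitz one $\Lambda m$ via a preliminary bootstrap that exploits the volume-constrained minimality of $E$ at nearby volumes; a Lipschitz penalty would be strictly subcritical and would absorb trivially for $r$ small). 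A further technical issue is justifying, for a.e.\ $r$ in a length PI space, the slicing identity for $P(E\setminus B_r(y))$ and the coarea equality $m'(r)=P(B_r(y),E^{(1)})$; both should follow from the BV theory on PI spaces together with a Sard-type argument discarding the null set of radii at which the trace of $E$ on $\partial B_r(y)$ is ill-behaved.
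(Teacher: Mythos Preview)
Your proposal follows the classical Gonzalez--Massari--Tamanini route, and the step you yourself flag as ``the hardest point'' is a genuine obstruction in this setting, not merely a technicality. First, the claim ``\(r'\asymp m(r)^{1/Q}\) by doubling'' is wrong in the direction you need: doubling only gives \(\mm(B_{r'}(z_0))\geq c\,(r')^Q\), hence \(r'\lesssim m(r)^{1/Q}\), whereas the bound \(\frac{m(r)}{r'}\leq C_1\,m(r)^{(Q-1)/Q}\) requires \(r'\gtrsim m(r)^{1/Q}\), i.e.\ an \emph{upper} Ahlfors bound \(\mm(B_{r'}(z_0))\leq C(r')^Q\) at \(z_0\), which a PI space need not satisfy (and a collapsed \(\RCD\) space typically does not). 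Second, your fallback of upgrading to a Lipschitz penalty \(\Lambda m\) is circular: a linear compensation from the deformation property requires a point \(z_0\) with \(\mm(B_{\rho}(z_0)\setminus E)=0\) for some \(\rho>0\), i.e.\ an \emph{interior} point of \(E^{(1)}\), which is precisely the openness you are trying to prove. The paper emphasises exactly this: the stronger linear deformation is available only \emph{after} topological regularity, and the GMT strategy does not seem to extend beyond the Ahlfors-regular case.

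The paper circumvents the absorption problem by a different mechanism. It does not work at \(\partial^e E\); instead it fixes \(x\in E^{(0)}\) and \(y\in E^{(1)}\) and compares the decay of \(v(r)=\mm(B_r(x)\cap E)\) and \(w(r)=\mm(B_r(y)\setminus E)\). On the set of radii where \(v(r)\geq w(r)\) one can compensate the removal of \(B_r(x)\) by adding \(B_{s(r)}(y)\) with \(s(r)\geq r\), so the deformation penalty becomes \(C_y\,v(r)/s(r)\leq C_y\,v(r)/r\). This term is now compared to the relative isoperimetric lower bound \(\tilde C\,\mm(B_r(x))^{1/\alpha}v(r)^{1-1/\alpha}/r\) at the \emph{same} point and scale, and the ratio is \(\big(v(r)/\mm(B_r(x))\big)^{1/\alpha}\to 0\) because \(x\in E^{(0)}\); thus absorption is automatic, with no constraint between structural constants and no Ahlfors regularity. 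A symmetry argument (swapping \(E\) with \(\X\setminus E\)) handles the radii where \(w(r)\geq v(r)\), and one concludes \(x\in\Int(E^{(0)})\), \(y\in\Int(E^{(1)})\) directly. The density estimates at \(\partial^e E\) are derived only afterwards, once an interior point is available to give the Lipschitz compensation. A smaller issue in your sketch: in a PI space one does not have \(m'(r)=P(B_r(y),E^{(1)})\) but rather \(\big(|{\bf D}\sfd_y|(B_r(y)\cap E^{(1)})\big)'=P(B_r(y),E^{(1)})\), with \(c\,\mm\leq|{\bf D}\sfd_y|\leq\mm\); this is repairable, but the absorption gap above is not.
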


The previous theorem implies density estimates on the volume and on the perimeter measure of a volume-constrained minimizer at
points of the essential boundary, see Theorem \ref{thm:density}. For an isoperimetric set, we can additionally prove its boundedness. Namely:
\begin{theorem}\label{thm:isoper_bdd}
Let \((\X,\sfd,\mm)\) be a length PI space having the deformation property. Suppose that $\inf_{x \in \X} \mea(B_1(x))>0$.
Let \(E\subseteq\X\) be an isoperimetric set. Then \(E^{(1)}\) is bounded. In particular, every isoperimetric set in \(\X\)
has a bounded representative.
\end{theorem}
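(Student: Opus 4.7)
The plan is to combine Theorem~\ref{thm:main} with the density estimates of Theorem~\ref{thm:density} and the hypothesis $\inf_{x\in\X}\mm(B_1(x))>0$, via a two-step argument: first prove that $\partial^e E$ is bounded, and then deduce from the openness of $E^{(1)}$ and the length-space structure that $E^{(1)}$ itself is bounded. As a preliminary reduction, assume $\X$ is unbounded (the conclusion is otherwise immediate) and set $c_0 \eqdef \inf_{x\in\X}\mm(B_1(x))>0$. Iterating local doubling gives $\inf_{x\in\X}\mm(B_r(x))>0$ for every fixed $r\in(0,1]$; moreover, for any $y\in\X$ one can pack $B_R(y)$ with pairwise disjoint unit balls centered at points $z_j$ with $d(y,z_j)=2j$ for $j=0,1,\dots,\lfloor(R-1)/2\rfloor$, such points being produced by the intermediate value theorem applied to $d(y,\cdot)$ along a near-geodesic from $y$ to a sufficiently far point. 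Consequently $\inf_{y\in\X}\mm(B_R(y))\to+\infty$ as $R\to+\infty$. Under the standing hypothesis, the density constants of Theorem~\ref{thm:density} are uniform across $\X$: there exist $\alpha,r_0>0$ such that $\mm(E\cap B_r(x))\geq\alpha\,\mm(B_r(x))$ for all $x\in\partial^e E$ and $r\in(0,r_0)$.

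\emph{Step 1: $\partial^e E$ is bounded.} If $\partial^e E$ were unbounded, a greedy inductive procedure would produce $(x_k)_k\subseteq\partial^e E$ with $d(x_j,x_k)\geq 2r_0$ for $j\neq k$ (at each step the excluded set is a finite union of bounded balls, while $\partial^e E$ is unbounded). The balls $B_{r_0/2}(x_k)$ would then be pairwise disjoint, and combining the density bound with the uniform lower volume bound $\inf_x \mm(B_{r_0/2}(x))>0$ would give
\[
\mm(E)\geq\sum_k\mm(E\cap B_{r_0/2}(x_k))\geq\alpha\sum_k\inf_{x\in\X}\mm(B_{r_0/2}(x))=+\infty,
\]
contradicting $\mm(E)<+\infty$. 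Therefore $\partial^e E\subseteq B_M(x_0)$ for some $M>0$.

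\emph{Step 2: $E^{(1)}$ is bounded.} By Theorem~\ref{thm:main}, $\partial E^{(1)}=\partial^e E\subseteq B_M(x_0)$. Assume by contradiction that $E^{(1)}$ is unbounded and pick $y\in E^{(1)}$ with $d(y,x_0)$ as large as needed, so that $R\eqdef d(y,\partial E^{(1)})\geq d(y,x_0)-M$ is arbitrarily large. For every $z\in B_R(y)$ and every $\varepsilon>0$, the length-space property of $\X$ supplies a curve from $y$ to $z$ of length less than $R$; this curve stays inside $B_R(y)$, which is disjoint from $\partial E^{(1)}$, so its connected image is contained in one of the open pieces of the disjoint union $\X\setminus\partial E^{(1)}=E^{(1)}\sqcup E^{(0)}$. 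Since $y\in E^{(1)}$, we conclude $z\in E^{(1)}$, whence $B_R(y)\subseteq E^{(1)}$. Combined with $\mm(B_R(y))\to+\infty$, this forces $\mm(E)=+\infty$, a contradiction.

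The only delicate technical point is the uniformity of the density constants in Step 1, for which the hypothesis $\inf_{x\in\X}\mm(B_1(x))>0$ is crucial: it ensures that the local doubling and Poincaré data of the PI structure are globally uniform, so that Theorem~\ref{thm:density} produces constants $\alpha,r_0$ independent of the base point $x\in\partial^e E$. Once this uniformity is in place, the remainder of the argument is topological and measure-theoretic, using only the openness of $E^{(1)}$ from Theorem~\ref{thm:main} and the length-space property of $\X$.
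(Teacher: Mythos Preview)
Your Step 2 and the preliminary packing argument are correct, but Step 1 has a genuine gap: the uniformity of the density constants from Theorem~\ref{thm:density} is not justified, and your stated justification is mistaken. The doubling and Poincar\'e constants \(C_D,C_P\) in a PI space are \emph{already} uniform in \(x\) by definition; the hypothesis \(\inf_x\mm(B_1(x))>0\) adds nothing in that direction. The actual dependence of \(\bar r\) and \(C\) on the bounded set \(B\) in Theorem~\ref{thm:density} enters through the deformation-property data \(R_x(\varnothing)\) and the associated constant \(C_2\) (needed to control \(P(B_r(x))\)), and through the Dini-type radius \(r_1\) chosen so that \(\mm(B_r(x)\cap E)\) and \(\mm(B_r(x)\setminus E)\) are uniformly small for \(x\in\bar B\). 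None of these quantities is controlled by a lower bound on \(\mm(B_1(x))\); for instance, Definition~\ref{def:deform} and \eqref{eq:R_x_unif_cpt} only guarantee that \(R_x(\varnothing)\) is bounded away from zero on \emph{bounded} sets, and there is no reason for \(\sup_x\mm(B_r(x))\to 0\) as \(r\to 0\) without an upper Ahlfors-type bound. So you cannot simply invoke Theorem~\ref{thm:density} at an infinite family of far-away boundary points with a common \(r_0\) and \(\alpha\).

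The paper avoids this obstacle entirely by taking a different route: it fixes a single interior point \(y\in{\rm int}(E^{(1)})\) (provided by Theorem~\ref{thm:main}), builds for each large \(R\) the competitor \(F_R=(E\cup B_{r(R)}(y))\cap B_R(y)\) with \(\mm(F_R)=\mm(E)\), and compares perimeters. The deformation property is then used only at the fixed point \(y\), so the constants \(R_y(E),C_y(E)\) are fixed once and for all, and the isoperimetric inequality for small volumes (Proposition~\ref{thm:isop_small}, which is where \(\inf_x\mm(B_1(x))>0\) actually enters) produces a differential inequality for \(f(R)=|\mathbf D\sfd_y|(E^{(1)}\setminus B_R(y))\) forcing it to vanish in finite time, a contradiction. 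If you want to salvage your approach, you would need either an independent argument that the density constants can be taken uniform on \(\partial^e E\) (which seems to require extra hypotheses such as Ahlfors regularity or globally uniform deformation constants as in Remark~\ref{rmk:ex_deform_prop}), or a direct estimate that bypasses Theorem~\ref{thm:density} altogether.
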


Since $\RCD(K,N)$ spaces with \(N<\infty\) are length PI spaces (see \cite{Rajala12,Sturm06II} and \cite{Lott-Villani09}),
and as recalled above they have the deformation property, putting together Theorem \ref{thm:main} and Theorem \ref{thm:isoper_bdd}
we obtain the following.
\begin{cor}\label{cor:BoundedCollapsed}
Let $(\X,\sfd,\mm)$ be an $\RCD(K,N)$ space with $N<\infty$. Let $E\subseteq\X$ be an isoperimetric set.
Then the sets $E^{(1)}, E^{(0)}$ are open and $\partial^e E=\partial E^{(1)} = \partial E^{(0)}$.
Moreover, if in addition $\inf_{x\in\X}\mm(B_1(x))>0$, then $E^{(1)}$ is bounded.
\end{cor}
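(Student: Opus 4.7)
The plan is to verify that every $\RCD(K,N)$ space with $N<\infty$ falls within the scope of Theorem~\ref{thm:main} and Theorem~\ref{thm:isoper_bdd}, so that the conclusions of the corollary follow as immediate applications. There is essentially no new work to do here; the corollary is the advertised consequence of the main theorems, and its proof reduces to checking the hypotheses.

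First, I would recall that an $\RCD(K,N)$ space with $N<\infty$ is a $\CD(K,N)$ space, hence by \cite{Sturm06II,Lott-Villani09} the reference measure $\mm$ satisfies a Bishop--Gromov-type inequality and is in particular uniformly locally doubling on bounded sets. A weak local $(1,1)$-Poincar\'{e} inequality holds by \cite{Rajala12} (or by combining the $\CD(K,N)$ condition with the infinitesimal Hilbertianity used in $\RCD$). Moreover, $\RCD(K,N)$ spaces are geodesic, and in particular length spaces. Thus $(\X,\sfd,\mm)$ is a length PI space.

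Next, I would invoke \cite[Theorem~1.1]{AntonelliPasqualettoPozzetta21}, which was already noted in the introduction to give precisely the deformation property of Definition~\ref{def:deform} for $\RCD(K,N)$ spaces. Having checked both standing hypotheses of Theorem~\ref{thm:main}, the first part of the corollary is immediate: for any isoperimetric set $E\subseteq\X$ (which is in particular a volume-constrained minimizer of the perimeter in the sense of Definition~\ref{def:VolumeConstrained}), we obtain $E^{(1)}=\mathrm{int}(E^{(1)})$ and $E^{(0)}=\mathrm{int}(E^{(0)})$, hence $E^{(1)}$ and $E^{(0)}$ are open and $\partial E^{(1)}=\partial E^{(0)}=\partial^e E$.

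For the second part, under the extra assumption $\inf_{x\in\X}\mm(B_1(x))>0$, all hypotheses of Theorem~\ref{thm:isoper_bdd} are satisfied as well, which yields boundedness of $E^{(1)}$. Since no step requires new ideas, no obstacle is expected; the only point worth some care is the pedantic verification that the PI axioms and the deformation property are indeed available in the form stated in the present paper (rather than in a cosmetically different one from the cited references), which is a routine inspection of the respective definitions.
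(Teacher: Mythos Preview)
Your proposal is correct and matches the paper's approach: the corollary is stated there as an immediate consequence of Theorem~\ref{thm:main} and Theorem~\ref{thm:isoper_bdd}, after observing that $\RCD(K,N)$ spaces with $N<\infty$ are length PI spaces (citing \cite{Rajala12,Sturm06II,Lott-Villani09}) and have the deformation property by \cite[Theorem~1.1]{AntonelliPasqualettoPozzetta21}. No separate proof is given in the paper beyond this reduction, which is precisely what you outline.
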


In the case of noncollapsed $\RCD(K,N)$ spaces,
the above result has been previously proved in \cite[Theorem 1.4]{AntonelliPasqualettoPozzetta21}.
\medskip

As an application of Corollary \ref{cor:BoundedCollapsed}, we can refine the rigidity part in the sharp isoperimetric inequality
on $\RCD(0,N)$ spaces $(\X,\sfd, \mm)$ with Euclidean volume growth. We recall that ``Euclidean volume growth'' means that the
\emph{asymptotic volume ratio}
\[
{\rm AVR}(\X,\sfd,\mm)\coloneqq\lim_{R\to\infty}\frac{\mm(B_R(p))}{\omega_N R^N},\quad\text{ for some }p\in\X,
\]
of the space is strictly positive. Recall that the existence of the above limit is guaranteed by the monotonicity of
\((0,+\infty)\ni r\mapsto\mm(B_r(p))/\omega_N r^N\), which in turn follows from the Bishop--Gromov inequality
(see e.g.\ \cite{Sturm06II}). Observe that the condition \({\rm AVR}(\X,\sfd,\mm)>0\) implies that \(\inf_{x\in\X}\mm(B_1(x))>0\).
The sharp isoperimetric inequality on these spaces, see \eqref{eq:Levy-Gromov} below, was obtained at different levels of 
generality in \cite{AgostinianiFogagnoloMazzieri, Brendle21, BaloghKristaly, APPSb, CavallettiManini, CavMan22}.
In \cite{APPSb} the rigidity for the isoperimetric inequality was proved for noncollapsed $\RCD(0,N)$ spaces.
In the recent \cite[Theorem 1.5]{CavMan22}, the authors prove the rigidity for the inequality in all $\RCD(0,N)$ spaces with Euclidean
volume growth under the additional assumption that the set achieving the equality is bounded.
An application of our Corollary \ref{cor:BoundedCollapsed} allows to drop the previous boundedness requirement,
thus obtaining the following unconditional rigidity statement.
\begin{theorem}[Sharp and rigid isoperimetric inequality on \({\sf RCD}(0,N)\) spaces with Euclidean volume growth]\label{thm:MAINMAIN}
Let \((\X,\sfd,\mm)\) be an \({\sf RCD}(0,N)\) space with \(1<N<\infty\) having Euclidean volume growth.
Then for every set of finite perimeter \(E\subseteq\X\) with \(\mm(E)<+\infty\) it holds that
\begin{equation}\label{eq:Levy-Gromov}
P(E)\geq N\omega_N^{\frac{1}{N}}{\rm AVR}(\X,\sfd,\mm)^{\frac{1}{N}}\mm(E)^{\frac{N-1}{N}}.
\end{equation}
Moreover, the equality in \eqref{eq:Levy-Gromov} holds for some set of finite perimeter \(E\subseteq\X\) with \(\mm(E)\in(0,+\infty)\)
if and only if \(\X\) is isometric to a Euclidean metric measure cone over an \({\sf RCD}(N-2,N-1)\) space and \(E\) is isometric,
up to negligible sets, to a ball centered at one of the tips of \(\X\).
\end{theorem}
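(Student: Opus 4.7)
The plan is to derive Theorem \ref{thm:MAINMAIN} as a direct corollary of Corollary \ref{cor:BoundedCollapsed} combined with two inputs already established in the literature. The inequality \eqref{eq:Levy-Gromov} is known in this generality from \cite{CavallettiManini, CavMan22}, building on \cite{AgostinianiFogagnoloMazzieri, Brendle21, BaloghKristaly, APPSb}, so the first assertion requires no work. For the rigidity, \cite[Theorem 1.5]{CavMan22} already gives the desired characterization of $\X$ and $E$ under the additional hypothesis that $E$ admits a bounded representative; hence the whole content of the present statement is to show that such boundedness is automatic in our setting.

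To this end, fix a set of finite perimeter $E\subseteq\X$ with $\mm(E)\in(0,+\infty)$ attaining equality in \eqref{eq:Levy-Gromov}. The first observation is that $E$ is necessarily an isoperimetric set in the sense of the introduction: any competitor $F$ with $\mm(F)=\mm(E)$ satisfies $P(F)\geq N\omega_N^{1/N}{\rm AVR}(\X,\sfd,\mm)^{1/N}\mm(F)^{(N-1)/N}=P(E)$ by the inequality just recalled, so $E$ minimizes perimeter at volume $v\eqdef\mm(E)$. This makes the topological/metric regularity developed in the previous sections of the paper available for $E$.

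The second observation is that the Euclidean volume growth hypothesis ${\rm AVR}(\X,\sfd,\mm)>0$ automatically produces a uniform lower bound on the volumes of unit balls, which is exactly what Corollary \ref{cor:BoundedCollapsed} asks for. Indeed, the Bishop--Gromov comparison on $\RCD(0,N)$ spaces implies that the map $r\mapsto\mm(B_r(x))/(\omega_N r^N)$ is nonincreasing on $(0,+\infty)$ for every $x\in\X$; letting $R\to+\infty$ in the resulting inequality $\mm(B_1(x))\geq\mm(B_R(x))/R^N$, valid for $R\geq 1$, yields $\mm(B_1(x))\geq \omega_N{\rm AVR}(\X,\sfd,\mm)>0$ uniformly in $x$. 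Hence Corollary \ref{cor:BoundedCollapsed} applies and furnishes a bounded representative of $E$, which we feed into \cite[Theorem 1.5]{CavMan22} to conclude that $\X$ is a Euclidean metric measure cone over an $\RCD(N-2,N-1)$ space and $E$ is, up to negligible sets, a ball centered at one of the tips of $\X$.

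The main — and essentially only — obstacle is the boundedness of the extremizer, which is precisely the hypothesis imposed in \cite{CavMan22} and which is now removed thanks to Theorem \ref{thm:isoper_bdd}; beyond that, the proof is a short assemblage of already-available results.
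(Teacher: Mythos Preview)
Your proposal is correct and matches the paper's approach exactly: the paper does not give a separate proof of Theorem \ref{thm:MAINMAIN} but explains in the introduction that the inequality is already in the literature, that the rigidity under boundedness is \cite[Theorem 1.5]{CavMan22}, and that Corollary \ref{cor:BoundedCollapsed} removes the boundedness assumption (the paper also notes explicitly that ${\rm AVR}>0$ implies $\inf_x\mm(B_1(x))>0$ via Bishop--Gromov). Your additional observation that an extremizer of \eqref{eq:Levy-Gromov} is automatically an isoperimetric set is the natural bridge to invoke Corollary \ref{cor:BoundedCollapsed}, and is implicit in the paper's discussion.
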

In the previous theorem, when we say that \textit{$\X$ is a Euclidean metric measure cone over an \({\sf RCD}(N-2,N-1)\) space} we mean that there is a compact \({\sf RCD}(N-2,N-1)\) metric measure space $(Z,\sfd_Z,\mm_Z)$ such that $(\X,\sfd,\mm)$ is isomorphic, as a metric measure space, to the metric measure cone $(C(Z),\sfd_{c},t^{N-1}\mathrm{d}t\otimes\mathrm\mm_Z)$, where $\sfd_c$ is the cone metric built using $\sfd_Z$. In case $1<N<2$, it is understood that in the rigidity part of the previous statement, the space $X$ is either a weighted Euclidean half-line or a weighted Euclidean line.\\
We stress that Theorem \ref{thm:MAINMAIN} is not a straightforward consequence of the results in \cite{APPSb}, according to which the same result holds in the class of noncollapsed spaces. Indeed, an $\RCD(0,N)$ space with $1<N<\infty$ and with Euclidean volume growth might not be noncollapsed. A simple example is given by the weighted Euclidean half-line $([0,+\infty),\sfd_{\mathrm{eu}},t^{N-1}\mathrm{d}t)$, with $N>1$.

\medskip
We now briefly discuss our strategy for the proof of Theorem \ref{thm:main}. As mentioned above, the Euclidean proof \cite[Example 21.3]{MaggiBook} cannot be adapted to our setting.
As in the classical \cite{GonzalezMassariTamanini,Xia05}, we gain information on a volume-constrained minimizer
by comparison with suitable competitors exploiting the deformation property, but our argument is different, more direct, and much shorter. The strategy of
\cite{GonzalezMassariTamanini,Xia05} consists in proving first that $E$ has an interior and  an exterior point, i.e.\ $\Int(\eone)\neq \varnothing$ and $ \Int(\ezero)\neq \varnothing$ (see
\cite[Theorem 1]{GonzalezMassariTamanini}), then one deduces that $E$ is a $(\Lambda,r_0)$-perimeter minimizer, and thus finally that $E$ is open. Instead, we prove directly that if
$x \in \ezero$ and $y \in \eone$ are arbitrary points, then $x\in \Int(\ezero)$ and $y \in  \Int(\eone)$. To do so we avoid deriving quantitative estimates on
the decay of $\mm(B_r(y)\setminus E)$ as in \cite{GonzalezMassariTamanini, Xia05, AntonelliPasqualettoPozzetta21}, and we rather adopt a more qualitative approach.
More precisely, the key point is to show (see the key Lemma \ref{lem:key_lemma}) that if the function $v(r)\coloneqq \mm(B_r(x)\cap E)$  vanishes, as $r\to 0^+$,
slower than the function $w(r)\coloneqq \mm(B_r(y)\setminus E)$, then $x \in \Int(\ezero)$ (and vice versa for $y \in \Int(\eone)$).
By ``slower'' we mean, roughly speaking, that $v(r)\ge w(r)$ for many $r>0$ in a measure-theoretic sense
(see Lemma \ref{lem:key_lemma} for the precise statement). However, up to exchanging $E$ with its complement $\X\setminus E$,
we can always ensure that $v(r)$ vanishes slower than $w(r)$, thus deducing that $x \in \Int(\ezero)$. By symmetry, we get $y \in \Int(\eone)$ as well.
\medskip

We point out that the strategy of \cite{GonzalezMassariTamanini,Xia05} does not seem to
generalize to our setting, unless we require additional assumptions -- such as Ahlfors regularity -- which we do not want
to make (in order to obtain a result which applies to the whole class of collapsed \({\sf RCD}(K,N)\) spaces). This motivated
us to look for an alternative proof of the topological result, which -- we believe -- is of interest even in the Euclidean setting,  since it brings simplifications to the classical arguments in \cite{GonzalezMassariTamanini}, still (necessarily) avoiding the use of the smooth structure of the ambient.
\medskip

{
We conclude the introduction by explicitly recording the following open problem.

\begin{question}
    Let \((\X,\sfd,\mm)\) be a length PI space and let \(E\subseteq\X\) be a volume-constrained minimizer
of the perimeter. Is it true that $E^{(1)}$ is open?
\end{question}
}

\textbf{Acknowledgements.}
Part of this research has been carried out at the Fields Institute (Toronto) in November 2022, during the Thematic Program on
Nonsmooth Riemannian and Lorentzian Geometry. The authors gratefully acknowledge the warm hospitality and the stimulating atmosphere.
The authors thank Panu Lahti for pointing out the example in Remark \ref{rem:Exaple2PINoDef}.
The authors also thank Camillo Brena, Vesa Julin, Tapio Rajala, and Daniele Semola for fruitful discussions on the topic of the paper. The authors also thank the Reviewer for the careful reading and for pointing out an inaccuracy in a preliminary version of the paper.
\section{Preliminaries}
Given a metric space \((\X,\sfd)\), we denote by \(\LIP_{\mathrm{loc}}(\X)\) the space of all locally Lipschitz functions from \(\X\) to \(\R\),
{i.e.\ of those functions \(f\colon\X\to\R\) such that for any \(x\in\X\) there exists \(r_x>0\) for which \(f\) is Lipschitz on \(B_{r_x}(x)\).}
The \emph{slope} \(\lip(f)\colon\X\to[0,+\infty)\) of a function \(f\in\LIP_{\mathrm{loc}}(\X)\) is defined as \(\lip(f)(x)\coloneqq 0\) if \(x\in\X\) is an isolated point and
\[
\lip(f)(x)\coloneqq\lims_{y\to x}\frac{|f(x)-f(y)|}{\sfd(x,y)}\quad\text{ if }x\in\X\text{ is an accumulation point.}
\]
The topological interior and the topological boundary of a set \(E\subseteq\X\) are denoted by \({\rm int}(E)\) and \(\partial E\), respectively.
{
A Borel measure \(\mu\geq 0\) on \(\X\) is \emph{locally finite} if for any \(x\in\X\) there exists \(r_x>0\) such that \(\mu(B_{r_x}(x))<+\infty\),
while we say that \(\mu\) is \emph{boundedly finite} if \(\mu(B)<+\infty\) whenever \(B\subseteq\X\) is bounded Borel. Trivially, each boundedly finite measure
is locally finite, while the converse holds e.g.\ if \((\X,\sfd)\) is \emph{proper}, i.e.\ bounded closed subsets of \(\X\) are compact.
Notice that locally finite Borel measures on a complete separable metric space are \(\sigma\)-finite.
}
\subsection{Sets of finite perimeter in metric measure spaces}
In this paper, by a \emph{metric measure space} \((\X,\sfd,\mm)\) we mean a complete separable metric space \((\X,\sfd)\) together with a boundedly finite Borel measure \(\mm\geq 0\) on \(\X\).
Following \cite{MIRANDA2003}, we define the \emph{total variation} \(|{\bf D}f|(B)\in[0,+\infty]\) of a given function \(f\in L^1_{\mathrm{loc}}(\X)\) in a Borel set \(B\subseteq\X\) as
\[
|{\bf D}f|(B)\coloneqq\inf_{B\subseteq\Omega\text{ open}}\inf\bigg\{\varliminf_{n\to\infty}\int_\Omega\lip(f_n)\,\d\mm\;\bigg|\;
(f_n)_{n\in\N}\subseteq\LIP_{\mathrm{loc}}(\Omega),\,f_n\to f\text{ in }L^1_{\mathrm{loc}}(\Omega)\bigg\}.
\]
If for some open cover \((\Omega_n)_{n\in\N}\) of \(\X\) we have that \(|{\bf D}f|(\Omega_n)<+\infty\) holds for every \(n\in\N\), then \(|{\bf D}f|\) is a locally finite Borel measure on \(\X\).
We say that a Borel set \(E\subseteq\X\) is \emph{of locally finite perimeter} if \(P(E,\cdot)\coloneqq|{\bf D}\nchi_E|\) is a locally finite measure, called the \emph{perimeter measure}
of \(E\). When \(P(E)\coloneqq P(E,\X)<+\infty\), we say that \(E\) is \emph{of finite perimeter}.
\begin{remark}\label{rmk:bdry_null}{\rm
If \(E\subseteq\X\) is a set of locally finite perimeter and \(x\in\X\) is a given point, then \(P(E,\partial B_r(x))=0\) for all but countably many radii \(r>0\).
This is due to the fact that \(\partial B_r(x)\cap\partial B_s(x)=\varnothing\) whenever \(0<r<s\) {and to the \(\sigma\)-finiteness of \(P(E,\cdot)\).}
\fr}\end{remark}

Given any \(f\in\LIP_{\mathrm{loc}}(\X)\), it holds that \(|{\bf D}f|\) is a locally finite measure and \(|{\bf D}f|\leq\lip(f)\mm\).
\begin{theorem}[Coarea formula {\cite[Proposition 4.2]{MIRANDA2003}}]
Let \((\X,\sfd,\mm)\) be a metric measure space. Fix any \(f\in L^1_{\mathrm{loc}}(\X)\) such that
\(|{\bf D}f|\) is a locally finite measure. Fix a Borel set \(E\subseteq\X\). Then
\(\R\ni t\mapsto P(\{f<t\},E)\in[0,+\infty]\) is a Borel measurable function and it holds that
\[
|{\bf D}f|(E)=\int_\R P(\{f<t\},E)\,\d t.
\]
\end{theorem}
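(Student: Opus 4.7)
The plan is to prove the identity and the companion Borel measurability claim in two stages: first for an open set $\Omega$ in place of $E$, then to extend to arbitrary Borel $E$ by outer regularity of both sides. Within the open case, both inequalities are established separately, via a Lipschitz approximation for the bound $\int P\le|{\bf D}f|$ and via a layer-cake / subadditivity argument for the bound $|{\bf D}f|\le \int P$.

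For $\int_\R P(\{f<t\},\Omega)\,\d t \le |{\bf D}f|(\Omega)$, I would first establish the Lipschitz coarea inequality $\int_\R P(\{g<t\},\Omega)\,\d t \le \int_\Omega \lip(g)\,\d\mm$ for $g\in\LIP_{\mathrm{loc}}(\Omega)$, obtained by approximating $\chi_{\{g<t\}}$ with the family of Lipschitz cut-offs $\phi_\eps^t \coloneqq \max(0,\min(1,\eps^{-1}(t-g)))$; the slope bound $\lip(\phi_\eps^t) \le \eps^{-1}\lip(g)\chi_{\{t-\eps<g<t\}}$ and Fubini on $\R\times\Omega$ yield the inequality after sending $\eps\to 0$. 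Next, given the relaxation definition of $|{\bf D}f|(\Omega)$, choose $(f_n)\subseteq\LIP_{\mathrm{loc}}(\Omega)$ with $f_n\to f$ in $L^1_{\mathrm{loc}}(\Omega)$ and $\int_\Omega\lip(f_n)\,\d\mm\to|{\bf D}f|(\Omega)$. Passing to a subsequence one may assume $f_n\to f$ $\mm$-a.e., and a standard measure-theoretic argument shows $\chi_{\{f_n<t\}}\to\chi_{\{f<t\}}$ in $L^1_{\mathrm{loc}}(\Omega)$ for a.e.\ $t\in\R$. Combining Fatou's lemma, the lower semicontinuity of the perimeter under $L^1_{\mathrm{loc}}$ convergence, and the Lipschitz coarea inequality applied to each $f_n$, one gets the desired bound.

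For the reverse bound $|{\bf D}f|(\Omega)\le\int_\R P(\{f<t\},\Omega)\,\d t$, the starting point is the subadditivity $|{\bf D}(u+v)|\le|{\bf D}u|+|{\bf D}v|$, which is immediate from the relaxation definition and the subadditivity of the slope. Decomposing $f=f_+-f_-$ and approximating $f_+$ by monotone simple functions $s_k=\sum_{i=1}^{N_k}(t_i-t_{i-1})\chi_{\{f>t_{i-1}\}}$ associated to finer and finer partitions $0=t_0<t_1<\cdots<t_{N_k}$ of $[0,\esssup f_+]$ chosen to avoid the at most countable set of jump values of $t\mapsto P(\{f>t\},\Omega)$, the lower semicontinuity of the total variation under $L^1_{\mathrm{loc}}$ convergence gives
\[
|{\bf D}f_+|(\Omega)\le\liminf_{k\to\infty}\sum_{i=1}^{N_k}(t_i-t_{i-1})P(\{f>t_{i-1}\},\Omega)=\int_0^{+\infty}P(\{f>t\},\Omega)\,\d t,
\]
where the last equality is a Riemann-sum convergence using the chosen partitions. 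An analogous bound holds for $f_-$, and summing via the subadditivity $|{\bf D}f|\le|{\bf D}f_+|+|{\bf D}f_-|$, together with $P(\{f>t\},\cdot)=P(\{f\le t\},\cdot)=P(\{f<t\},\cdot)$ for a.e.\ $t$, yields the bound. Along the way, the Borel measurability of $t\mapsto P(\{f<t\},\Omega)$ follows from left-lower semicontinuity in $t$, a consequence of the $L^1_{\mathrm{loc}}$ left-continuity of $t\mapsto\chi_{\{f<t\}}$ and the lower semicontinuity of the perimeter.

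Finally, the extension from open $\Omega$ to a Borel set $E$ is obtained by outer regularity of both sides: $|{\bf D}f|(E)$ equals the infimum of $|{\bf D}f|(\Omega)$ over open $\Omega\supseteq E$ by its definition, while the right-hand side is outer regular by monotone convergence applied to a decreasing sequence of open neighbourhoods of $E$. The main obstacle, in my view, is the reverse inequality: in Euclidean space it follows almost for free from the divergence-duality formula for $|Df|$, but in the metric setting, where no such duality is available, one must proceed through simple-function approximation and carefully choose the partitions to sidestep the (at most countable) set of jumps of the level-set perimeter, while controlling the lower semicontinuity of the total variation to ensure the Riemann sum converges to the right integral.
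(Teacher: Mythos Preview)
The paper does not supply a proof of this theorem: it is stated with a citation to \cite[Proposition 4.2]{MIRANDA2003} and used as a black box, so there is no ``paper's own proof'' to compare against. Your outline follows the standard argument in Miranda's paper --- Lipschitz coarea inequality plus lower semicontinuity for the bound $\int P\le|{\bf D}f|$, and layer-cake/simple-function approximation with subadditivity of the total variation for the reverse bound --- and is essentially correct.

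One technical point deserves tightening. In the reverse inequality you claim that the Riemann sums $\sum_i (t_i-t_{i-1})P(\{f>t_{i-1}\},\Omega)$ converge to $\int_0^\infty P(\{f>t\},\Omega)\,\d t$ provided the partition nodes avoid the countably many jump values. Avoiding jumps is not sufficient: the map $t\mapsto P(\{f>t\},\Omega)$ is only one-sided lower semicontinuous and need not be Riemann integrable, so left-endpoint Riemann sums can fail to converge to the Lebesgue integral. The standard fix is an averaging/Fubini trick: for a dyadic mesh $2^{-n}$, the identity $\int_0^{2^{-n}}\sum_j P(\{f>j2^{-n}+\theta\},\Omega)\,\d\theta=\int_0^M P(\{f>t\},\Omega)\,\d t$ yields some shift $\theta_n\in[0,2^{-n})$ for which the corresponding Riemann sum is bounded above by the Lebesgue integral, and the shifted simple functions still converge to $f_+$ in $L^1_{\mathrm{loc}}$. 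With this adjustment (and a truncation to reduce to bounded $f_\pm$), the argument goes through.
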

\begin{cor}\label{cor:conseq_coarea}
Let \((\X,\sfd,\mm)\) be a metric measure space. Fix \(x\in\X\) and a Borel set \(E\subseteq\X\).
Define \(f\colon(0,+\infty)\to\R\) as \(f(r)\coloneqq|{\bf D}\sfd_x|(E\cap B_r(x))\) for every \(r>0\), where we denote
\(\sfd_x\coloneqq\sfd(\cdot,x)\in\LIP(\X)\). Then the function \(f\) is locally absolutely continuous and it holds
that \(f'(r)=P(B_r(x),E)\) for \(\mathcal L^1\)-a.e.\ \(r>0\).
\end{cor}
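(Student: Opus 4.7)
The plan is to apply the coarea formula from the excerpt to the $1$-Lipschitz function $\sfd_x$ with $B=E\cap B_r(x)$. Since $|{\bf D}\sfd_x|\leq\lip(\sfd_x)\mm\leq\mm$ is locally finite, and since $\{\sfd_x<t\}=B_t(x)$ for $t>0$ while the set is empty for $t\leq 0$, this yields
\begin{equation*}
f(r)=\int_0^\infty P(B_t(x),E\cap B_r(x))\,\d t.
\end{equation*}
The whole proof then reduces to evaluating this integrand as a function of $t$ at fixed $r>0$.

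The key step -- and the only one requiring some care -- is to show that $P(B_t(x),\cdot)$ is concentrated on the sphere $\{\sfd_x=t\}$. I would argue directly from the definition of the total variation recalled in the excerpt: on the open set $U=B_t(x)$ the constant sequence $f_n\equiv 1\in\LIP(U)$ converges to $\nchi_{B_t(x)}|_U=1$ in $L^1_\loc(U)$ with zero slope, so $|{\bf D}\nchi_{B_t(x)}|(U)=0$; symmetrically, taking $f_n\equiv 0$ on the open set $V=\{\sfd_x>t\}$ gives $|{\bf D}\nchi_{B_t(x)}|(V)=0$. Since $\X\setminus(U\cup V)=\{\sfd_x=t\}$, the support claim follows. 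This is the delicate step precisely because the total variation is defined only by approximation and there is no pointwise gradient of $\nchi_{B_t(x)}$ at our disposal.

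Granted this, for $0<t<r$ one has $\{\sfd_x=t\}\subseteq B_r(x)$, hence $P(B_t(x),E\cap B_r(x))=P(B_t(x),E)$, while for $t>r$ the sphere is disjoint from $B_r(x)$ and the perimeter vanishes. Therefore
\begin{equation*}
f(r)=\int_0^r P(B_t(x),E)\,\d t\qquad\text{for every }r>0.
\end{equation*}
Since $f(r)<+\infty$ for every $r$ by local finiteness of $|{\bf D}\sfd_x|$, the integrand $t\mapsto P(B_t(x),E)$ is locally integrable on $(0,+\infty)$, and standard one-variable real analysis gives that $f$ is locally absolutely continuous with $f'(r)=P(B_r(x),E)$ for $\Leb{1}$-a.e.\ $r>0$.
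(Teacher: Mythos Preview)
Your proof is correct and follows essentially the same route as the paper's own argument: both apply the coarea formula to $\sfd_x$ on $E\cap B_r(x)$ and reduce to $f(r)=\int_0^r P(B_s(x),E)\,\d s$, from which local absolute continuity and the a.e.\ identification of $f'$ follow. The paper passes over the justification of $P(B_s(x),E\cap B_r(x))=P(B_s(x),E)$ for $s<r$ and its vanishing for $s>r$ in one stroke, while you spell out the concentration of $P(B_t(x),\cdot)$ on $\{\sfd_x=t\}$ directly from the definition; this is the same content made explicit.
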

\begin{proof}
By virtue of the coarea formula, we obtain that \(f(r)=\int_\R P\big(\{\sfd_x<s\},E\cap B_r(x)\big)\,\d s=\int_0^r P(B_s(x),E)\,\d s\)
for every \(r>0\), whence it follows that \(f(r)-f(\tilde r)=\int_{\tilde r}^r P(B_s(x),E)\,\d s\) for every \(r>\tilde r>0\).
Hence, \(f\) is locally absolutely continuous and \(f'(r)=P(B_r(x),E)\) for every Lebesgue point \(r\) of
\(s\mapsto P(B_s(x),E)\), thus for \(\mathcal L^1\)-a.e.\ \(r>0\).
\end{proof}
\subsection{PI spaces}\label{s:PI}
Even though the general theory of sets of finite perimeter is meaningful in any metric measure space, a much more refined calculus is
available in the class of doubling spaces supporting a weak form of \((1,1)\)-Poincar\'{e} inequality, which we refer to as \emph{PI spaces}.
Below we recall the definition of PI space we adopt in this paper, referring e.g.\ to \cite{Bjorn-Bjorn11,HKST15} for a thorough account of this topic.
We will also recall some key features of sets of finite perimeter in PI spaces.
\begin{definition}[PI space]\label{def:PI}
Let \((\X,\sfd,\mm)\) be a metric measure space. Then:
\begin{itemize}
\item We say that \((\X,\sfd,\mm)\) is \emph{uniformly locally doubling} if there is a function
\(C_D\colon(0,+\infty)\to(0,+\infty)\) such that
\[
\mm\big(B_{2r}(x)\big)\leq C_D(R)\,\mm\big(B_r(x)\big)\quad\text{ for every }0<r<R\text{ and }x\in\X.
\]
\item We say that \((\X,\sfd,\mm)\) supports a \emph{weak local \((1,1)\)-Poincar\'{e} inequality} if
there exist a constant \(\lambda\geq 1\) and a function \(C_P\colon(0,+\infty)\to(0,+\infty)\) such that
for any function \(f\in\LIP_{\mathrm{loc}}(\X)\) it holds that
\[
\fint_{B_r(x)}\bigg|f-\fint_{B_r(x)}f\,\d\mm\bigg|\,\d\mm\leq C_P(R)\,r\fint_{B_{\lambda r}(x)}\lip(f)\,\d\mm
\quad\text{ for all }0<r<R\text{ and }x\in\X.
\]
\item \((\X,\sfd,\mm)\) is a \emph{PI space} if it is uniformly locally doubling
and it supports a weak local \((1,1)\)-Poincar\'{e} inequality.
\end{itemize}
\end{definition}
{
We point out that if \((\X,\sfd,\mm)\) is a uniformly locally doubling space, then \((\X,\sfd)\) is proper,
so \((\X,\sfd)\) is locally compact, and locally finite Borel measures on \((\X,\sfd)\) are boundedly finite.
}
\begin{remark}\label{rem:StrongPoincare}{\rm
Let \((\X,\sfd,\mm)\) be a PI space such that \((\X,\sfd)\) is a \emph{length space}, i.e.\ the distance between
any two points in \(\X\) is the infimum of the lengths of rectifiable curves joining them. Then
the weak local \((1,1)\)-Poincar\'{e} inequality is in fact \emph{strong}, namely it holds with \(\lambda=1\);
see for example \cite[Corollary 9.5 and Theorem 9.7]{hajkosk}.
{Moreover, the completeness and the local compactness of \((\X,\sfd)\) ensure that \((\X,\sfd)\) is also geodesic.}
\fr}\end{remark}

Given a Borel set \(E\subseteq\X\) in a PI space \((\X,\sfd,\mm)\), we define its \emph{essential interior} and \emph{essential exterior} as
\[
E^{(1)}\coloneqq\bigg\{x\in\X\;\bigg|\;\lim_{r\to 0}\frac{\mm(E\cap B_r(x))}{\mm(B_r(x))}=1\bigg\},\qquad
E^{(0)}\coloneqq\bigg\{x\in\X\;\bigg|\;\lim_{r\to 0}\frac{\mm(E\cap B_r(x))}{\mm(B_r(x))}=0\bigg\},
\]
respectively. The \emph{essential boundary} of \(E\) is defined as \(\partial^e E\coloneqq\X\setminus(E^{(1)}\cup E^{(0)})\).
Notice that \(E^{(1)}\), \(E^{(0)}\), and \(\partial^e E\) are Borel sets and that \(\partial^e E\subseteq\partial E\).
It follows from the Lebesgue differentiation theorem (which holds on every uniformly locally doubling metric measure space,
see e.g.\ \cite[Theorem 1.8]{Heinonen01}) that \(\mm(E^{(1)}\Delta E)=0\) and \(\mm(E^{(0)}\Delta(\X\setminus E))=0\). Moreover,
if \(E\) is a set of finite perimeter, then we know from \cite[Theorem 5.3]{amb01} that \(P(E,\cdot)\) is concentrated
on \(\partial^e E\).
\begin{proposition}\label{prop:formula_union}
Let \((\X,\sfd,\mm)\) be a PI space. Let \(E,F\subseteq\X\) be sets of locally finite perimeter with \(P(E,\partial^e F)=0\). Then
\[
P(E\cap F,\cdot)\leq P(E,\cdot) \res F^{(1)} +P(F,\cdot) \res E^{(1)}.
\]
\end{proposition}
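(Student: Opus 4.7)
My plan is to derive the inequality via Lipschitz approximations combined with the Leibniz rule for slopes, and then to exploit the hypothesis $P(E,\partial^e F)=0$ to kill the contribution coming from the common essential boundary.

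Since the right-hand side is a Borel measure, it suffices to prove that $P(E\cap F,\Omega)\le P(E,F^{(1)}\cap\Omega)+P(F,E^{(1)}\cap\Omega)$ for every open set $\Omega\subseteq\X$. Using the definition of the total variation together with truncation (which does not increase the slope), I would choose sequences $(g_n)_n,(h_n)_n\subseteq\LIP_{\mathrm{loc}}(\Omega)$ with values in $[0,1]$ such that $g_n\to\nchi_E$ and $h_n\to\nchi_F$ in $L^1_{\mathrm{loc}}(\Omega)$, and such that $\int_\Omega\lip(g_n)\,\d\mm\to P(E,\Omega)$ and $\int_\Omega\lip(h_n)\,\d\mm\to P(F,\Omega)$. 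Since $g_n h_n\to\nchi_{E\cap F}$ in $L^1_{\mathrm{loc}}(\Omega)$, the pointwise Leibniz inequality $\lip(g_n h_n)\le g_n\lip(h_n)+h_n\lip(g_n)$ combined with the $L^1_{\mathrm{loc}}$-lower semicontinuity of the total variation gives
\[
P(E\cap F,\Omega)\le\liminf_n\int_\Omega g_n\lip(h_n)\,\d\mm+\liminf_n\int_\Omega h_n\lip(g_n)\,\d\mm.
\]

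The key step, and main obstacle, is to show $\liminf_n\int_\Omega g_n\lip(h_n)\,\d\mm\le P(F,E^{(1)}\cap\Omega)$ and its symmetric counterpart. Heuristically, the measures $\lip(h_n)\mm$ concentrate on $\partial^e F$ in the limit (where $P(F,\cdot)$ lives): on $\partial^e F\cap E^{(1)}$ the functions $g_n$ should tend to $1$, contributing exactly $P(F,E^{(1)}\cap\Omega)$; on $\partial^e F\cap E^{(0)}$ they should tend to $0$, contributing nothing; and on the common essential boundary $\partial^e F\cap\partial^e E$ the contribution is bounded by $P(F,\partial^e E)$, which vanishes because $P(E,\partial^e F)=0$ together with the mutual absolute continuity of $P(E)$ and $P(F)$ on the common essential boundary (a consequence of the representation of the perimeter measure in PI spaces). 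Making this weak-strong convergence rigorous is the principal technical hurdle, since $g_n\to\nchi_E$ only in $L^1_{\mathrm{loc}}$ while $\lip(h_n)\mm$ can weakly converge to a measure mutually singular with $\mm$; one overcomes it by choosing the approximations carefully, for instance via ball-average mollifications of $\nchi_E$, so that the transition region where $g_n$ is neither close to $0$ nor to $1$ shrinks to a neighborhood of $\partial^e E$ of negligible $P(F)$-measure.
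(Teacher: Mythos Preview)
Your approach via Lipschitz approximations and the Leibniz rule is natural, but there is a genuine gap at precisely the point you flag as the ``principal technical hurdle'', and the suggested fix does not close it.

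First a minor slip: \(\liminf_n(a_n+b_n)\ge\liminf_n a_n+\liminf_n b_n\), not \(\le\); to conclude you would need \(\limsup\) bounds on each cross-term.

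The real obstruction is that the two cross-terms impose \emph{conflicting} requirements on the \emph{same} pair of sequences. To control \(\int_\Omega g_n\lip(h_n)\,\d\mm\) by \(P(F,E^{(1)}\cap\Omega)\) you want \((h_n)\) optimal, so that \(\lip(h_n)\mm\rightharpoonup P(F,\cdot)|_\Omega\), together with \((g_n)\) pointwise well-behaved on \(\partial^e F\). For the symmetric term you need the roles reversed: \((g_n)\) optimal and \((h_n)\) pointwise well-behaved. If you take \(g_n\) to be ball-averages of \(\nchi_E\) as you propose, you do get \(g_n\to\nchi_{E^{(1)}}\) pointwise on \(E^{(1)}\cup E^{(0)}\), but there is no reason for \(\int_\Omega\lip(g_n)\,\d\mm\to P(E,\Omega)\) in a general PI space, so the second cross-term \(\int_\Omega h_n\lip(g_n)\,\d\mm\) is left uncontrolled. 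Even for the first cross-term alone, pairing merely pointwise bounded convergence of \(g_n\) against weak-\(*\) convergence of \(\lip(h_n)\mm\) to a measure singular with respect to \(\mm\) does not give convergence of the integral; Egorov-type localizations fail because weak-\(*\) convergence does not restrict to closed sets with the inequality you need.

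The paper sidesteps all of this by working directly with the density representation \(P(G,\cdot)=\theta_G\,\mathcal H^h|_{\partial^e G}\) from \cite{amb01}. From the subadditivity \(P(E\cap F,\cdot)\le P(E,\cdot)+P(F,\cdot)\) one reads off \(\theta_{E\cap F}\le\theta_E\) \(\mathcal H^h\)-a.e.\ on \(\partial^e E\setminus\partial^e F\) and symmetrically; the hypothesis \(P(E,\partial^e F)=0\) forces \(\mathcal H^h(\partial^e E\cap\partial^e F)=0\) since \(\theta_E>0\); and the inclusion \(\partial^e(E\cap F)\subseteq(\partial^e E\cap F^{(1)})\cup(\partial^e F\cap E^{(1)})\cup(\partial^e E\cap\partial^e F)\) then gives the claim in a few lines. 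Since you already invoke this very representation to obtain \(P(F,\partial^e E)=0\), you gain nothing in elementarity by the approximation route, and the direct density comparison is both shorter and free of the convergence issue.
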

\begin{proof}
We know from \cite[Theorem 5.3]{amb01} that the perimeter measure \(P(G,\cdot)\) of a set \(G\subseteq\X\) of locally finite perimeter
can be written as \(P(G,\cdot)=\theta_G\mathcal H^h|_{\partial^e G}\) for some Borel function \(\theta_G\colon\X\to(0,+\infty)\),
where \(\mathcal H^h\) stands for the codimension-one Hausdorff measure (see \cite[Section 5]{amb01}). Since
\(P(E\cap F,\cdot)\leq P(E,\cdot)+P(F,\cdot)\) and \(P(E,\cdot)|_{\X\setminus\partial^e E}=P(F,\cdot)|_{\X\setminus\partial^e F}=0\),
we deduce that \(\theta_{E\cap F}\leq\theta_E\) and \(\theta_{E\cap F}\leq\theta_F\) hold \(\mathcal H^h\)-a.e.\ in
\(\partial^e E\setminus\partial^e F\) and \(\partial^e F\setminus\partial^e E\), respectively. Moreover, we deduce from
\(\int_{\partial^e F}\theta_E\,\d\mathcal H^h|_{\partial^e E}=P(E,\partial^e F)=0\) that \(\mathcal H^h(\partial^e E\cap\partial^e F)=0\).
Given that \(\partial^e(E\cap F)=(\partial^e E\cap F^{(1)})\sqcup(\partial^e F\cap E^{(1)})\) up to an $\mathcal H^h$-negligible set, which is shown e.g.\ in the
proof of \cite[Lemma 2.5]{AntonelliPasqualettoPozzetta21}, we conclude that
\[\begin{split}
P(E\cap F, \cdot)&=\theta_{E\cap F}\mathcal H^h|_{\partial^e(E\cap F)}=
\theta_{E\cap F}\mathcal H^h|_{\partial^e E\cap F^{(1)}}+\theta_{E\cap F}\mathcal H^h|_{\partial^e F\cap E^{(1)}}
\\&\leq\theta_E\mathcal H^h|_{\partial^e E\cap F^{(1)}}+\theta_F\mathcal H^h|_{\partial^e F\cap E^{(1)}},
\end{split}\]
which yields the statement.
\end{proof}

The following is a direct consequence of the study in \cite{amb01}, taking Remark \ref{rem:StrongPoincare} into account.
\begin{theorem}[Relative isoperimetric inequality {\cite[Remark 4.4]{amb01}}]\label{thm:isoper_ineq}
Let \((\X,\sfd,\mm)\) be a length PI space. Then there exists a function
\(C_I=C_I(C_D,C_P)\colon(1,+\infty)\times(0,+\infty)\to(0,+\infty)\) such that the following property holds: given a set
\(E\subseteq\X\) of finite perimeter, a radius \(R>0\), and an exponent \(\alpha>\max\{\log_2(C_D(R)),1\}\), we have that
\[
\min\big\{\mm(B_r(x)\cap E),\mm(B_r(x)\setminus E)\big\}\leq
C_I(\alpha,R)\bigg(\frac{r^\alpha}{\mm\big(B_r(x)\big)}\bigg)^{\frac{1}{\alpha-1}}P\big(E,B_r(x)\big)^{\frac{\alpha}{\alpha-1}},
\]
for every \(x\in\X\) and \(r\in(0,R)\).
\end{theorem}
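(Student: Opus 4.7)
The plan is to derive the claimed inequality from the $(1,1)$--Poincar\'e inequality, after first upgrading it to a Sobolev--Poincar\'e inequality with exponent $s=\alpha/(\alpha-1)$, and then testing the result against (an approximation of) the characteristic function $\chi_E$. Since $(\X,\sfd)$ is a length space, Remark \ref{rem:StrongPoincare} provides the strong Poincar\'e inequality, i.e.\ we may take $\lambda=1$ throughout. Starting from this, the standard chain-of-balls truncation (in the spirit of the Hajłasz--Koskela argument) produces, for every $\alpha>\max\{\log_2 C_D(R),1\}$, a constant $C=C(C_D,C_P,\alpha,R)$ such that
\[
\left(\fint_{B_r(x)}\bigl|f-f_{B_r(x)}\bigr|^s\,\d\mm\right)^{1/s}\leq C\,r\fint_{B_r(x)}\lip(f)\,\d\mm,\qquad s=\frac{\alpha}{\alpha-1},
\]
for every $f\in\LIP_{\mathrm{loc}}(\X)$, every $x\in\X$, and every $r\in(0,R)$, where $f_{B_r(x)}=\fint_{B_r(x)}f\,\d\mm$. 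The exponent $s$ arises because $\alpha$ plays the role of a doubling-dimension upper bound obtained by iterating $\mm(B_{2r})\leq C_D(R)\mm(B_r)$ down to small scales.

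Next, I would apply this Sobolev--Poincar\'e inequality to a sequence $(f_n)\subseteq\LIP_{\mathrm{loc}}(\X)$ with $f_n\to\chi_E$ in $L^1_{\mathrm{loc}}$ and $\limif_n \int_{B_r(x)}\lip(f_n)\,\d\mm\leq P(E,\overline{B_r(x)})$, as furnished by the definition of the total variation. Choosing (by Remark \ref{rmk:bdry_null}) a radius with $P(E,\partial B_r(x))=0$, passing to the limit along $n$, and then removing the countable exceptional set by left-continuity of $r\mapsto P(E,B_r(x))$, gives
\[
\left(\fint_{B_r(x)}|\chi_E-a|^s\,\d\mm\right)^{1/s}\leq C\,r\,\frac{P(E,B_r(x))}{\mm(B_r(x))},\qquad a\coloneqq\frac{\mm(E\cap B_r(x))}{\mm(B_r(x))}.
\]
On the other hand, a direct computation gives
\[
\fint_{B_r(x)}|\chi_E-a|^s\,\d\mm=a(1-a)^s+(1-a)a^s\geq \frac{1}{2^s}\min\{a,1-a\},
\]
using $\max\{a,1-a\}\geq 1/2$ in the last step. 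Combining the two displays and raising to the power $s$, one obtains
\[
\min\big\{\mm(B_r(x)\cap E),\mm(B_r(x)\setminus E)\big\}\leq 2^s C^s\,\frac{r^s\,P(E,B_r(x))^s}{\mm(B_r(x))^{s-1}},
\]
which, since $s=\alpha/(\alpha-1)$ and $s-1=1/(\alpha-1)$, is exactly the claimed estimate with $C_I(\alpha,R)\coloneqq 2^s C^s$.

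The main technical obstacle I expect is the self-improvement step from the $(1,1)$--Poincar\'e inequality to the Sobolev--Poincar\'e inequality with a controlled, explicit dependence of the constant on both $R$ and on the chosen exponent $\alpha>\log_2 C_D(R)$. The argument is classical (telescoping chain of concentric balls combined with the doubling property, applied on each level of a dyadic decomposition) but requires careful bookkeeping to keep the final constant depending only on $C_D$, $C_P$, $\alpha$, and $R$. The remaining steps---the algebraic lower bound, the limiting procedure on $(f_n)$, and the elimination of exceptional radii---are essentially routine once this upgraded inequality is in place.
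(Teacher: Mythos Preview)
The paper does not give its own proof of this theorem; it simply quotes the statement from \cite[Remark 4.4]{amb01} together with Remark \ref{rem:StrongPoincare} (which supplies \(\lambda=1\)). Your outline is precisely the standard derivation that underlies that reference: upgrade the strong \((1,1)\)-Poincar\'e to an \((s,1)\)-Sobolev--Poincar\'e with \(s=\alpha/(\alpha-1)\) via the Haj\l asz--Koskela telescoping argument (which is where the condition \(\alpha>\log_2 C_D(R)\) enters, as the effective doubling dimension at scale \(R\)), then apply it to an approximating sequence for \(\chi_E\) and finish with the elementary lower bound \(a(1-a)^s+(1-a)a^s\geq 2^{-s}\min\{a,1-a\}\). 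The algebra in your final display is correct and matches the claimed exponents.

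Two minor comments. First, the detour through \(P(E,\overline{B_r(x)})\) is unnecessary: by the very definition of the perimeter on the open set \(B_r(x)\) you can choose \((f_n)\subseteq\LIP_{\mathrm{loc}}(B_r(x))\) with \(\varliminf_n\int_{B_r(x)}\lip(f_n)\,\d\mm=P(E,B_r(x))\) directly, so the step invoking Remark \ref{rmk:bdry_null} and left-continuity can be dropped. Second, for the limit on the left-hand side you only have \(L^1\)-convergence of \(f_n\), so you should pass to an a.e.\ convergent subsequence and apply Fatou (together with \((f_n)_{B_r(x)}\to a\)); this is routine but worth stating explicitly.
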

In the next proposition we recall the well-known fact that in the class of PI spaces where unit balls have measure uniformly
bounded away from zero, there holds an isoperimetric inequality for sets of small volume. Such a result is essentially due to
\cite{CoulhonSaloffCoste95}, after \cite{Buser82, Kanai86, ChavelFeldman91}. For a proof, we refer the reader to the argument
in \cite[Lemma V.2.1]{ChavelIsoperimetricBook01}.
\begin{proposition}[Isoperimetric inequality for small volumes]\label{thm:isop_small}
Let \((\X,\sfd,\mm)\) be a length PI space. Then there exist constants $\alpha>1$, $C>0$ such that the following holds.
If $v_0\coloneqq \inf_{x \in \X} \mea(B_1(x))>0$, then for all Borel sets $E\subseteq\X$ with $\mea(E)<v_0/2$ it holds that
\[
P(E)\ge C v_0^\frac{1}{\alpha} \mea(E)^{\frac{\alpha-1}{\alpha}}.
\]
\end{proposition}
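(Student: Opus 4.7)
The plan is to apply the relative isoperimetric inequality of Theorem~\ref{thm:isoper_ineq} at the specific scale $r=1$ at every point of a bounded-overlap cover of $\X$ by unit balls, and then sum the resulting local estimates. Fix once and for all some $R>1$ and an exponent $\alpha>\max\{\log_2 C_D(R),1\}$. Without loss of generality assume $P(E)<+\infty$. The point of using radius $1$ is that, since $\mm(E)<v_0/2$ and $\mm(B_1(x))\geq v_0$ by assumption, we have
\[
\mm(B_1(x)\cap E)\leq\mm(E)<v_0/2\leq\mm(B_1(x))/2,
\]
so the minimum appearing in Theorem~\ref{thm:isoper_ineq} is realized by $\mm(B_1(x)\cap E)$. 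Using $\mm(B_1(x))\geq v_0$ to estimate the factor $\mm(B_1(x))^{-1/(\alpha-1)}$ from above, the inequality becomes
\[
\mm(B_1(x)\cap E)\leq C_I(\alpha,R)\,v_0^{-1/(\alpha-1)}\,P(E,B_1(x))^{\alpha/(\alpha-1)},\qquad\forall x\in\X.
\]

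Next I would construct a countable cover $\{B_1(x_i)\}_{i\in\N}$ of $\X$ of bounded overlap. Taking $\{x_i\}$ to be a maximal $1$-separated subset of $\X$ (which exists by Zorn's lemma in view of the separability of $\X$), the balls $B_1(x_i)$ cover $\X$ by maximality while the balls $B_{1/2}(x_i)$ are pairwise disjoint. If $B_1(x_j)\cap B_1(x_i)\neq\varnothing$, then $\sfd(x_i,x_j)<2$, hence $B_{1/2}(x_j)\subseteq B_{5/2}(x_i)\subseteq B_{9/2}(x_j)$; two applications of the uniform local doubling property then give $\mm(B_{5/2}(x_i))\leq C\,\mm(B_{1/2}(x_j))$ for some constant $C=C(C_D)$, and packing the disjoint balls $B_{1/2}(x_j)$ into $B_{5/2}(x_i)$ and dividing by $\min_j\mm(B_{1/2}(x_j))$ yields that the multiplicity $N$ of the cover depends only on $C_D$. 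Summing the pointwise inequality and using $\mm(E)\leq\sum_i\mm(B_1(x_i)\cap E)$,
\[
\mm(E)\leq C_I\,v_0^{-1/(\alpha-1)}\sum_i P(E,B_1(x_i))^{\alpha/(\alpha-1)}.
\]

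Finally, set $p\coloneqq\alpha/(\alpha-1)>1$. The elementary super-additivity $\sum_i a_i^p\leq(\sum_i a_i)^p$ for $a_i\geq 0$ and $p\geq 1$, combined with bounded overlap and the fact that $P(E,\cdot)$ is a measure (so $\sum_i P(E,B_1(x_i))=\int\sum_i\nchi_{B_1(x_i)}\,\d P(E,\cdot)\leq N\,P(E)$), gives
\[
\mm(E)\leq C_I\,v_0^{-1/(\alpha-1)}\,\big(N\,P(E)\big)^{\alpha/(\alpha-1)},
\]
which upon raising to the power $(\alpha-1)/\alpha$ and rearranging becomes exactly $P(E)\geq C\,v_0^{1/\alpha}\,\mm(E)^{(\alpha-1)/\alpha}$ for a constant $C=C(\alpha,N,C_I)>0$. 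I do not expect a genuine obstacle here: the only mildly delicate point is verifying that the multiplicity $N$ of the cover depends only on the doubling constant, which is a purely metric fact; everything else is a formal manipulation of the relative isoperimetric estimate, exploiting that radius $1$ is precisely the scale at which the assumption $\inf_x\mm(B_1(x))\geq v_0$ couples with the volume constraint $\mm(E)<v_0/2$.
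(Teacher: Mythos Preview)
Your argument is correct. The paper does not actually prove this proposition; it simply cites \cite[Lemma V.2.1]{ChavelIsoperimetricBook01} (after \cite{CoulhonSaloffCoste95,Buser82,Kanai86,ChavelFeldman91}), so there is no in-house proof to compare against. Your route --- applying the relative isoperimetric inequality of Theorem~\ref{thm:isoper_ineq} at scale $r=1$, where the hypothesis $\mm(E)<v_0/2\leq\mm(B_1(x))/2$ forces the minimum to be $\mm(B_1(x)\cap E)$, and then summing over a bounded-overlap cover by unit balls --- is a standard and clean way to pass from the local inequality to the global one, and is in the spirit of the cited references.

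Two cosmetic remarks. First, the fixed radius $R>1$ you choose at the start serves two different purposes: it enters the constant $C_I(\alpha,R)$ when you apply Theorem~\ref{thm:isoper_ineq} at $r=1<R$, and it governs the doubling constant you invoke in the packing argument. For the latter you need to iterate doubling from radius $1/2$ up to radius $9/2$ (or $8$), which requires $R$ larger than~$4$ rather than merely $R>1$; taking, say, $R=5$ from the outset fixes this with no change to the argument. Second, ``two applications'' of doubling is an undercount (you need about four to descend from $B_{9/2}(x_j)$ to $B_{1/2}(x_j)$), but since you only claim $C=C(C_D)$ this is harmless. Everything else --- the super-additivity $\sum a_i^{p}\leq(\sum a_i)^{p}$ for $p\geq 1$, the bounded-overlap estimate $\sum_i P(E,B_1(x_i))\leq N\,P(E)$, and the final rearrangement --- is routine and correct.
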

\begin{remark}\label{rmk:P_non_null}{\rm
Let \((\X,\sfd,\mm)\) be a length PI space and \(E\subseteq\X\) a set of finite perimeter such that \(\mm(E),\mm(\X\setminus E)>0\).
Then the relative isoperimetric inequality ensures that \(P(E)\neq 0\). In order to prove it, fix any \(x\in\X\) and notice that
we have \(\mm(B_R(x)\cap E),\mm(B_R(x)\setminus E)>0\) for some \(R>0\) sufficiently large, thus \(P(E)\geq P(E,B_R(x))>0\).
\fr}\end{remark}
\begin{lemma}\label{lem:var_dist}
Let \((\X,\sfd,\mm)\) be a length PI space. Then there exists \(c=c(\inf C_D,\inf C_P)\in(0,1)\) such that
\begin{equation}\label{eq:total variation and m}
c\,\mm\leq|{\bf D}\sfd_x|\leq\mm\quad\text{ for every }x\in\X,
\end{equation}
where we denote \(\sfd_x\coloneqq\sfd(\cdot,x)\in\LIP_{\mathrm{loc}}(\X)\). In particular, it holds that
\begin{equation}\label{eq:null ball boundary}
\mm\big(\partial B_r(x)\big)=0\quad\text{ for every }x\in\X\text{ and }r>0.
\end{equation}
\end{lemma}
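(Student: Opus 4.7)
The upper bound $|{\bf D}\sfd_x|\le\mm$ is immediate: $\sfd_x$ is $1$-Lipschitz, so $\lip(\sfd_x)\le 1$ pointwise, and testing the definition of the total variation with the constant sequence $f_n\equiv\sfd_x$ yields $|{\bf D}\sfd_x|(\Omega)\le\mm(\Omega)$ for every open $\Omega$; Borel regularity then promotes this to a measure inequality. For the lower bound my plan is to first establish the localized estimate
\[
|{\bf D}\sfd_x|(B_r(y))\ge c\,\mm(B_r(y))\qquad\text{for every }y\ne x\text{ and }0<r<\sfd(y,x),
\]
with $c$ depending only on the doubling and Poincar\'e constants, and then propagate it to a measure inequality by a Vitali covering argument (points carry no $\mm$-mass in a length doubling space, a standard fact that follows by placing disjoint balls along a geodesic issuing from an alleged atom).

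The heart of the argument is a lower bound on the mean oscillation,
\[
\fint_{B_r(y)}\bigl|\sfd_x-\overline{\sfd_x}\bigr|\,\d\mm\ge c_0\,r,\qquad \overline{\sfd_x}\coloneqq\fint_{B_r(y)}\sfd_x\,\d\mm,
\]
where $c_0$ depends only on the doubling constant. Since $(\X,\sfd)$ is geodesic (Remark \ref{rem:StrongPoincare}), I would pick a geodesic $\gamma$ from $x$ to $y$ and set $z\coloneqq\gamma(\sfd(y,x)-3r/4)$, so that $y,z\in B_r(y)$ and $\sfd_x(y)-\sfd_x(z)=3r/4$. A dichotomy on whether $\overline{\sfd_x}\ge\sfd_x(y)-3r/8$ shows that $|\sfd_x-\overline{\sfd_x}|\ge r/4$ on at least one of $B_{r/8}(y)\subseteq B_r(y)$ or $B_{r/8}(z)\subseteq B_r(y)$, and a fixed number of doubling iterations makes the measure of this smaller ball comparable to $\mm(B_r(y))$.

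With the oscillation bound in hand, the weak $(1,1)$-Poincar\'e inequality (with $\lambda=1$ by Remark \ref{rem:StrongPoincare}) applied to any approximating sequence $(f_n)\subseteq\LIP_{\mathrm{loc}}(\Omega)$ with $f_n\to\sfd_x$ in $L^1_{\mathrm{loc}}(\Omega)$ for an open $\Omega\supseteq B_r(y)$ yields
\[
\fint_{B_r(y)}|f_n-\overline{f_n}|\,\d\mm\le C_P\,r\fint_{B_r(y)}\lip(f_n)\,\d\mm.
\]
The left-hand side converges to the mean oscillation of $\sfd_x$ as $n\to\infty$ by $L^1$-convergence, so combining with the previous paragraph gives $\liminf_n\int_{B_r(y)}\lip(f_n)\,\d\mm\ge(c_0/C_P)\,\mm(B_r(y))$; taking the infimum over $(f_n)$ and $\Omega$ closes the local lower bound with $c\coloneqq c_0/C_P$.

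Finally, \eqref{eq:null ball boundary} follows from Miranda's coarea formula for $\sfd_x$:
\[
|{\bf D}\sfd_x|(\partial B_r(x))=\int_0^\infty P(B_t(x),\partial B_r(x))\,\d t.
\]
For each $t\ne r$, every point of $\partial B_r(x)\subseteq\{\sfd_x=r\}$ lies in the topological interior of $B_t(x)$ (if $t>r$) or of its complement (if $t<r$), hence in the respective essential interior or exterior; thus $\partial^e B_t(x)\cap\partial B_r(x)=\varnothing$ and the integrand vanishes. So $|{\bf D}\sfd_x|(\partial B_r(x))=0$, and the already established lower bound $c\,\mm\le|{\bf D}\sfd_x|$ forces $\mm(\partial B_r(x))=0$. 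I expect the main obstacle to be the mean-oscillation dichotomy: all ingredients (geodesic picking, doubling bookkeeping, Poincar\'e) are standard, but coordinating them cleanly takes some care.
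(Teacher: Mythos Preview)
Your approach is correct and differs from the paper's only in how the lower bound $c\,\mm\le|{\bf D}\sfd_x|$ is obtained. The paper simply invokes \cite[Equation~(4.5)]{Ambrosio-Pinamonti-Speight15}, which gives the general comparison $c\,\lip(f)\,\mm\le|{\bf D}f|$ for every locally Lipschitz $f$ on a PI space, and then observes that $\lip(\sfd_x)\equiv 1$ because the space is geodesic. You instead produce a self-contained argument tailored to $\sfd_x$: a geodesic dichotomy yields a mean-oscillation lower bound $\fint_{B_r(y)}|\sfd_x-\overline{\sfd_x}|\,\d\mm\ge c_0 r$, the (strong) Poincar\'e inequality converts this into $|{\bf D}\sfd_x|(B_r(y))\ge c\,\mm(B_r(y))$, and a Vitali cover (together with atomlessness, which your sketched disjoint-balls-along-a-geodesic argument does establish if one places them at dyadic distances from the putative atom) globalizes to the measure inequality. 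Your route is longer but avoids the external reference and is in fact a direct reproof, for the special case $f=\sfd_x$, of what \cite{Ambrosio-Pinamonti-Speight15} proves in general; the paper's route is a two-line citation. The upper bound and the coarea-based derivation of $\mm(\partial B_r(x))=0$ are handled identically in both.
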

\begin{proof}
Recall that \(|{\bf D}\sfd_x|\leq\lip(\sfd_x)\mm\). Moreover, we deduce from \cite[Equation (4.5)]{Ambrosio-Pinamonti-Speight15}
that there exists a constant \(c=c(\inf C_D,\inf C_P)\in(0,1)\) such that \(c\,\lip(\sfd_x)\mm\leq|{\bf D}\sfd_x|\). To obtain
\eqref{eq:total variation and m}, observe that \(\lip(\sfd_x)\equiv 1\): the inequality \(\lip(\sfd_x)\leq 1\) holds
in any metric space, while the converse inequality readily follows from the {fact that \((\X,\sfd)\) is geodesic}.
Finally, \eqref{eq:null ball boundary} can be proved by combining \eqref{eq:total variation and m} with the coarea formula:
we can estimate
\[
\mm\big(\partial B_r(x)\big)\leq\frac{1}{c}|{\bf D}\sfd_x|\big(\partial B_r(x)\big)=
\frac{1}{c}\int_\R P\big(B_s(x),\partial B_r(x)\big)\,\d s=0,
\]
where the last identity follows from the fact that \(P(B_s(x),\cdot)\) is concentrated on \(\partial^e B_s(x)\subseteq\partial B_s(x)\).
\end{proof}
\section{Topological regularity}
Let us begin with the definition of a volume-constrained minimizer of the perimeter.
\begin{definition}[Volume-constrained minimizer]\label{def:VolumeConstrained}
Let \((\X,\sfd,\mm)\) be a metric measure space. Then a set \(E\subseteq\X\) of locally finite perimeter is said to be
a \emph{volume-constrained minimizer} of the perimeter if the following property is verified: given a Borel
set \(F\subseteq\X\) and a compact set \(K\subseteq\X\) satisfying \(\mm((E\Delta F)\setminus K)=0\)
and \(\mm(E\cap K)=\mm(F\cap K)\), it holds \(P(E, K)\leq P(F, K)\).
\end{definition}

Observe that \(E\) is a volume-constrained minimizer if and only if \(\X\setminus E\) is a volume-constrained minimizer.
\begin{remark}{\rm
An \emph{isoperimetric set}, i.e.\ a set \(E\subseteq\X\) of finite perimeter with \(0<\mm(E)<+\infty\)
such that \(P(E)\leq P(F)\) for any Borel set \(F\subseteq\X\) with \(\mm(F)=\mm(E)\), is a volume-constrained
minimizer of the perimeter.
\fr}\end{remark}
Next we introduce our definition of a metric measure space having the \emph{deformation property}, which will be our standing
assumption throughout the rest of the paper.
\begin{definition}[Deformation property]\label{def:deform}
Let \((\X,\sfd,\mm)\) be a metric measure space {with \((\X,\sfd)\) proper.} Then we say that \((\X,\sfd,\mm)\)
has the \emph{deformation property} if the following property holds: for every set of locally finite perimeter \(E\subseteq\X\)
and any point \(x\in\X\), there exist constants \(R\in(0,1]\) and \(C\ge 0\) such that
\begin{subequations}\begin{align}
\label{eq:deform_1}
P(E\setminus B_r(y),{B_{2R}(x)})\leq C\frac{\mm(B_r(y)\cap E)}{r}+P(E,{B_{2R}(x)})&\quad\forall y\in B_R(x),\,r\in(0,R),\\
\label{eq:deform_2}
P(E\cup B_r(y),{B_{2R}(x)})\leq C\frac{\mm(B_r(y)\setminus E)}{r}+P(E,{B_{2R}(x)})&\quad\forall y\in B_R(x),\,r\in(0,R),
\end{align}\end{subequations}
\end{definition}
%for any bounded closed set $K\subseteq\X$ such that $B_{2R}(x)\subseteq K$.
%
%

For convenience, we define from now on $R_x(E)\in(0,1]$ to be the maximal \(R\in(0,1]\)  such that the above holds for some
$C\ge 0$ and we  define $C_x(E)\ge 0$ to be the minimal constant such that \eqref{eq:deform_1} and \eqref{eq:deform_2} hold
with $R=R_x(E).$ Note that, by symmetry, we have that \(R_x(E)=R_x(\X\setminus E)\) and \(C_x(E)=C_x(\X\setminus E)\);
this is the reason why in Definition \ref{def:deform} we require the validity of both \eqref{eq:deform_1} and
\eqref{eq:deform_2} with the same constants \(C\) and \(R\).
{
We also observe that if \(E\subseteq\X\) is a given set of finite perimeter (resp.\ of locally finite perimeter), then \eqref{eq:deform_1} is equivalent to asking that
\(P(E\setminus B_r(y),S)\leq C\frac{\mm(B_r(y)\cap E)}{r}+P(E,S)\) holds for every \((y,r)\in B_R(x)\times(0,R)\) and every Borel set (resp.\ bounded Borel set)
\(S\subseteq\X\) with \(B_{2R}(x)\subseteq S\). Similarly for \eqref{eq:deform_2}. We will often make use of this observation without further notice. Also:
\begin{equation}\label{eq:R_x_unif_cpt}
\inf_{x\in B}R_x(E)>0\quad\text{ for every bounded set }B\subseteq\X.
\end{equation}
Indeed, the compactness of the closure of \(B\) ensures that \(B\subseteq\bigcup_{i=1}^n B_{R_{x_i}(E)/2}(x_i)\) for some \(x_1,\ldots,x_n\in B\),
which gives \(R_x(E)\geq\delta\coloneqq\min\big\{R_{x_i}(E)/2\,:\,i=1,\ldots,n\big\}>0\) for every \(x\in B\). The same argument shows also that
\eqref{eq:deform_1} and \eqref{eq:deform_2} hold for every \(x\in B\) for some \(R\) and \(C\) that depend only on \(B\) and \(E\),
e.g.\ by taking \(R\coloneqq\delta\) and \(C\coloneqq\max\big\{C_{x_i}(E)\,:\,i=1,\ldots,n\big\}\).
}
\begin{remark}[Spaces having the deformation property]\label{rmk:ex_deform_prop}{\rm
These are some spaces with the deformation property:
\begin{itemize}
\item[\(\rm i)\)] Euclidean spaces (see e.g.\ \cite{GonzalezMassariTamanini} and the references therein).
\item[\(\rm ii)\)] Riemannian manifolds (this can be proved e.g.\ by following the proof of \cite[Theorem 1.1]{AntonelliPasqualettoPozzetta21}
and using the fact that the Ricci curvature is locally bounded from below).
\item[\(\rm iii)\)] \({\sf RCD}(K,N)\) spaces with \(K\in\R\) and \(N\in[1,\infty)\) ({proved in \cite[Theorem 1.1]{AntonelliPasqualettoPozzetta21}
building upon the Gauss--Green formula in \cite[Theorem 2.4]{BPS23}}).
\end{itemize}
We point out that in the above cases a stronger version of the deformation property holds, since, given an arbitrary $R>0$,
the constants \(C_x(E)\) for which the deformation property holds at every point $x\in\X$ and for every $0<r<R$, can be chosen
to be independent of \(E,x\), and to be dependent only on $K,N,R$.

It would be interesting to study whether there are other distinguished examples of PI spaces having the deformation property.
One natural class to investigate is the one of sub-Riemannian manifolds, or, more specifically, the one of Carnot groups.
For example, in the first Heisenberg group one has a sub-Laplacian comparison theorem. Being $r$ the Carnot--Carathéodory distance
from the origin, we have that $\Delta_{\mathrm{H}}r\leq 4/r$ holds in the distributional sense, where $\Delta_{\mathrm{H}}$ is the
horizontal Laplacian. See \cite{SubLaplacianComparison} for the study of sub-Laplacian comparison theorems in more general
sub-Riemannian structures, and \cite[Corollary 4.19]{CavallettiMondinoAnlPDE} for the Laplacian comparison theorem
in arbitrary essentially non-branching $\sf MCP$ spaces. Then, coupling this with the Gauss--Green formulae for Carnot groups
in \cite{ComiMagnani}, one could argue following the lines of \cite[Theorem 2.32 and Theorem 1.1]{AntonelliPasqualettoPozzetta21}
to obtain that at least $\mathbb H^1$, and more in general all the groups that are essentially non-branching
${\sf MCP}(K,N)$ spaces, with $K\in\mathbb R$ and $N\in (1,\infty)$ (cf. \cite{BarilariRizziMCP}, \cite{BadreddineRifford}), have the deformation property. Since this is out of the scope
of the present note, and since there are also some regularity issues of the distance function to deal with, we do not treat these
examples here, but we leave it to possible future investigations.

We mention that, on the other hand, the topological regularity of isoperimetric sets is already proved in \cite{LeonardiRigot}
in the setting of Carnot groups and in \cite{GalliRitore} on a certain class of sub-Riemannian manifolds.
\fr}\end{remark}
\begin{remark}\label{rmk:example_PI_no_def}{\rm
There exist PI spaces where the deformation property fails. For example, fix a sequence of pairwise well-separated non-empty balls \(B_n\coloneqq B_{r_n}(x_n)\) in \(\R^2\) such that \(x_n\to 0\)
and \(\sum_n r_n<+\infty\). Now consider the density function \(\rho\colon\R^2\to[1,2]\) given by \(\rho\coloneqq\nchi_E+2\nchi_{\R^2\setminus E}\), where \(E\coloneqq\bigcup_n B_n\).
Letting \(\mm\coloneqq\rho\mathcal L^2\) we have \(\mathcal L^2\leq\rho\mathcal L^2\leq 2\mathcal L^2\), so that \((\R^2,|\cdot|,\mm)\) is an Ahlfors regular geodesic PI space.
We claim that the deformation property is not valid for the set of finite perimeter \(E\) at the origin \(0\). To check it, notice that for any \(n\in\N\) it holds \(P(B_n)=2\pi r_n\), while
\(P(B_{r_n+\varepsilon}(x_n))=4\pi(r_n+\varepsilon)\) and \(\mm(B_{r_n+\varepsilon}(x_n)\setminus B_n)=2\pi(2r_n\varepsilon+\varepsilon^2)\) for any $\varepsilon \in (0,\varepsilon_n)$ for some $\varepsilon_n>0$ sufficiently small. Therefore,
\[
\frac{P(E\cup B_{r_n+\varepsilon}(x_n))-P(E)}{\mm(B_{r_n+\varepsilon}(x_n)\setminus E)/(r_n+\varepsilon)}=
\frac{(2\pi r_n+4\pi\varepsilon)(r_n+\varepsilon)}{2\pi(2r_n\varepsilon+\varepsilon^2)}\to+\infty\quad\text{ as }\varepsilon\searrow 0,
\]
which shows that the deformation property fails at the origin.
%{
%If \(2\sum_{n\in S}r_n^2<\big(\sum_{n\in S}r_n\big)^2\) for some \(S\subseteq\N\), then \(E\) is not an isoperimetric set: compare \(E\) with \((E\cup B)\setminus\bigcup_{n\in S}B_{r_n}(x_n)\),
%where \(B\) is any ball with \(\mm(B)=\mm\big(\bigcup_{n\in S}B_{r_n}(x_n)\big)\) that is well-separated from \(E\).}
However, we are not aware of any example of a PI space where the deformation property fails when tested on an isoperimetric set, nor of an example of a PI space where the essential interior of some isoperimetric set is not topologically open.
%We suspect that \(E\) is an isoperimetric set if \(2\sum_{n\in S}r_n^2\geq\big(\sum_{n\in S}r_n\big)^2\) for every \(S\subseteq\N\), which happens e.g.\ if \(\sum_{k>n}r_k<(\sqrt 2-1)r_n\) for all \(n\in\N\).
%
%However, comparing with a ball $B$ such that $\mm(B)=\mm(E)$ and $B\cap E=\varnothing$, one checks that \(E\) cannot be an isoperimetric set.
%
\fr}\end{remark}
\begin{remark}\label{rem:Exaple2PINoDef}{\rm
The validity of the deformation property on a metric measure space $(\X, \sfd,\mm)$ entails a growth condition: given \(x\in\X\), there exist \(C_x,r_x>0\) such that
\begin{equation}\label{eq:growth_cond}
P(B_r(y))\leq C_x\frac{\mm(B_r(y))}{r}\quad\text{ for every }y\in B_{r_x}(x)\text{ and }r\in(0,r_x).
\end{equation}
The previous \eqref{eq:growth_cond} follows just by taking \(E\coloneqq\varnothing\) in the deformation property. We have that \eqref{eq:growth_cond} is not equivalent to the deformation property (e.g.\ in the example in
Remark \ref{rmk:example_PI_no_def} the property \eqref{eq:growth_cond} is satisfied). However, there are examples of PI spaces where also \eqref{eq:growth_cond} fails.
The example we are going to describe has been pointed out to the authors by Panu Lahti.
Consider the measure \(\mm\coloneqq|x|^{-1/2}\d x\) in \(\R\). Since the function \(|x|^{-1/2}\)
is an \(A_1\)-Muckenhoupt weight, we know that \((\R,|\cdot|,\mm)\) is a PI space (see e.g.\ \cite{HKM06}). Using that \(\frac{\mm(B_r(0))}{r}=2\fint_0^r\frac{1}{\sqrt x}\,\d x=\frac{4}{\sqrt r}\to+\infty\)
as \(r\searrow 0\), one can easily check that the codimension-one Hausdorff measure of the singleton $\{0\}$ diverges, i.e.\ \(\mathcal H^h(\{0\})=+\infty\). It follows from \cite[Theorem 5.3]{amb01} that \(B_{|y|}(y)\) is not a set of locally finite perimeter when \(y\in(0,+\infty)\).
Hence, \eqref{eq:growth_cond} fails for \(x=0\).
\fr}\end{remark}
Given a metric measure space $(\X,\sfd,\mm)$, a point $x\in \X$, and a Borel set $E\subseteq \X$, we introduce the notation
\begin{equation}\label{eq:aux_v_w}
 v_{E,x}(r)\coloneqq \mea(B_r(x)\cap E),\quad w_{E,x}(r)\coloneqq \mea(B_r(x)\setminus E) \quad \text{ for every }r>0.
\end{equation}

The core of the proof of our main Theorem \ref{thm:main} is contained in the following technical result.
\begin{lemma}\label{lem:key_lemma}
Let \((\X,\sfd,\mm)\) be a length PI space having the deformation property. Let \(E\subseteq\X\) be a volume-constrained
minimizer of the perimeter. Fix any \(x\in E^{(0)}\) and \(y\in E^{(1)}\). Define the functions
\(v_{E,x},w_{E,y}\colon(0,+\infty)\to[0,+\infty)\) as in \eqref{eq:aux_v_w}. Fix a sequence \((r_n)_n\subseteq(0,1)\)
such that \(r_n\to 0\). For any \(n\in\N\), we define the Borel set \(A_{E,r_n}^{x,y}\subseteq(0,r_n)\) as
\begin{equation}\label{eq:aux_A}
A_{E,r_n}^{x,y}\coloneqq\big\{r\in(0,r_n)\;\big|\;v_{E,x}(r)\geq w_{E,y}(r)\big\}.
\end{equation}
Suppose the following conditions are verified:
\begin{itemize}
\item[\(\rm i)\)] There exists \(\delta\in (0,R_y(E))\) such that \(\bar B_\delta(x)\cap\bar B_\delta(y)=\varnothing\),
\(v_{E,x}(\delta)>0\), and \(w_{E,y}(\delta)>0\).
\item[\(\rm ii)\)] The inequality \(\mathcal L^1(A_{E,r_n}^{x,y})\geq r_n/2\) holds for infinitely many \(n\in\N\).
\end{itemize}
Then it holds that \(x\in{\rm int}(E^{(0)})\).
\end{lemma}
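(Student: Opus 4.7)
The plan is to argue by contradiction, assuming $v_{E,x}(r) > 0$ for every $r > 0$; equivalently, $x \notin {\rm int}(E^{(0)})$ (since $\mm(E \Delta E^{(1)}) = 0$ implies that $v_{E,x}(\rho) = 0$ exactly when $B_\rho(x) \subseteq E^{(0)}$). The strategy is to build a volume-preserving competitor for $E$ and combine the volume-constrained minimality with the coarea formula and the relative isoperimetric inequality (Theorem \ref{thm:isoper_ineq}).

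First, using the continuity of $w_{E,y}$ (which follows from $\mm(\partial B_s(y)) = 0$, a consequence of Lemma \ref{lem:var_dist}) together with $w_{E,y}(\delta) > 0$, I define, for $r > 0$ small enough that $v_{E,x}(r) < w_{E,y}(\delta)$, a radius $\phi(r) \in (0, \delta)$ by $w_{E,y}(\phi(r)) = v_{E,x}(r)$. Monotonicity of $w_{E,y}$ together with $v_{E,x}(r) \geq w_{E,y}(r)$ on $A_{E,r_n}^{x,y}$ force $\phi(r) \geq r$ on that set. The competitor $F_r := (E \setminus B_r(x)) \cup B_{\phi(r)}(y)$ is volume-preserving on the compact set $K := \bar B_\delta(x) \cup \bar B_\delta(y)$. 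Combining Proposition \ref{prop:formula_union} (applied to the intersection $E \cap (\X \setminus B_r(x))$ and, by passing to complements, to the union $(E \setminus B_r(x)) \cup B_{\phi(r)}(y)$), together with the disjointness of the two balls and $P(E, \partial B_r(x)) = P(E, \partial B_{\phi(r)}(y)) = 0$ for all but countably many $r$ (Remark \ref{rmk:bdry_null}), for a.e.\ such $r$ I obtain
\[
P(F_r, K) \leq P(E, K) - P(E, B_r(x)) - P(E, B_{\phi(r)}(y)) + P(B_r(x), E^{(1)}) + P(B_{\phi(r)}(y), E^{(0)}).
\]
The volume-constrained minimality $P(E, K) \leq P(F_r, K)$ then yields the key inequality
\[
P(E, B_r(x)) + P(E, B_{\phi(r)}(y)) \leq P(B_r(x), E^{(1)}) + P(B_{\phi(r)}(y), E^{(0)}),
\]
which I denote by $(\ast)$.

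To exploit $(\ast)$, Corollary \ref{cor:conseq_coarea} together with Lemma \ref{lem:var_dist} gives the coarea bounds
\[
\int_0^{r_n} P(B_s(x), E^{(1)}) \, ds \leq v_{E,x}(r_n), \qquad \int_0^{\phi(r_n)} P(B_s(y), E^{(0)}) \, ds \leq w_{E,y}(\phi(r_n)) = v_{E,x}(r_n).
\]
The hypothesis $\mathcal{L}^1(A_{E,r_n}^{x,y}) \geq r_n/2$ combined with Markov's inequality lets me pick a good $r^* \in A_{E,r_n}^{x,y}$ with $r^* \asymp r_n$ at which $P(B_{r^*}(x), E^{(1)}) \lesssim v_{E,x}(r_n)/r_n$. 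A change-of-variables argument for the monotone (and absolutely continuous) function $\phi$ — using $\int_0^{r_n} P(B_{\phi(r)}(y), E^{(0)}) \phi'(r) \, dr \leq v_{E,x}(r_n)$ together with the control $\phi(r) \geq r$ — yields the analogous bound $P(B_{\phi(r^*)}(y), E^{(0)}) \lesssim v_{E,x}(r_n)/r_n$ at (a further restriction of) the same $r^*$. Applying the relative isoperimetric inequality (Theorem \ref{thm:isoper_ineq}) to $E$ inside $B_{r^*}(x)$, which is valid because $v_{E,x}(r^*) \leq \mm(B_{r^*}(x))/2$ for $r^*$ small (as $x \in E^{(0)}$), gives $P(E, B_{r^*}(x)) \gtrsim v_{E,x}(r^*)^{(\alpha-1)/\alpha} \, \mm(B_{r^*}(x))^{1/\alpha} / r^*$. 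Plugging this lower bound together with the two coarea upper bounds into $(\ast)$, and using the doubling property to compare $\mm(B_{r^*}(x))$ with $\mm(B_{r_n}(x))$, rearranging produces $v_{E,x}(r_n)/\mm(B_{r_n}(x)) \gtrsim 1$ for infinitely many $n$, in contradiction with $x \in E^{(0)}$.

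The main obstacle I expect is the double-pigeonhole selection of $r^*$: while the coarea bound at $x$ directly yields many good radii in $A_{E,r_n}^{x,y}$, showing that for many of them the value $\phi(r)$ is \emph{also} good on the $y$-side — and not merely on average — requires the careful change-of-variables argument for the monotone $\phi$ sketched above, with some extra care at points where $\phi'$ may fail to be bounded away from zero, exploiting the inequality $\phi(r) \geq r$ on $A_{E,r_n}^{x,y}$.
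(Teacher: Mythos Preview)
Your setup is correct and closely matches the paper's: the same competitor $F_r=(E\setminus B_r(x))\cup B_{\phi(r)}(y)$, the same use of Proposition~\ref{prop:formula_union} and the volume-constrained minimality, and the same application of the relative isoperimetric inequality at $x$. The divergence is at the step where you bound the ``cost at $y$'' term $P(B_{\phi(r)}(y),E^{(0)})$, and here your proposal has a genuine gap.

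The paper bounds this term \emph{pointwise} via the deformation property: since $\phi(r)\ge r$ and $\phi(r)<\delta<R_y(E)$, one has
\[
P(E\cup B_{\phi(r)}(y),K)\le P(E,K)+C_y\,\frac{\mm(B_{\phi(r)}(y)\setminus E)}{\phi(r)}\le P(E,K)+C_y\,\frac{v_{E,x}(r)}{r},
\]
which feeds directly into a differential inequality for $f(r)=|{\bf D}\sfd_x|(B_r(x)\cap E^{(1)})$ that one then integrates over $[r_n/4,r_n]\cap A_n$. You instead try to bound $h(\phi(r)):=P(B_{\phi(r)}(y),E^{(0)})$ by averaging, using $\int_0^{\phi(r_n)}h(s)\,\d s\le v_{E,x}(r_n)$ and the change of variables $s=\phi(r)$. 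But this only controls $\int h(\phi(r))\,\phi'(r)\,\d r$, not $\int h(\phi(r))\,\d r$, and the inequality $\phi(r)\ge r$ on $A_n$ does \emph{not} yield any lower bound on $\phi'$. Concretely, if $v_{E,x}$ is constant on an interval $I\subseteq[r_n/4,r_n]\cap A_n$ (i.e.\ the corresponding annulus carries no $E$-mass, which is perfectly compatible with $x\in E^{(0)}\setminus{\rm int}(E^{(0)})$), then $\phi\equiv s_0$ on $I$ and your Markov-type selection is forced to evaluate $h$ at the single radius $s_0$; nothing in the coarea bound controls $h(s_0)$ individually. Relatedly, $\phi$ need not be absolutely continuous (it can jump whenever $w_{E,y}$ is flat), so the change-of-variables identity you invoke is not justified as written.

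This is not a purely technical omission: your argument nowhere uses the deformation property, so if it went through it would prove the lemma---and hence Theorem~\ref{thm:main}---for arbitrary length PI spaces, resolving the open question stated at the end of the introduction. The deformation property is invoked in the paper precisely to supply the pointwise bound on the $y$-side contribution that your pigeonhole argument cannot deliver.
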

\begin{proof}
We argue by contradiction: suppose that \(x\notin{\rm int}(E^{(0)})\). Recalling that
\(\mm(B_\delta(y)\setminus E)=w_{E,y}(\delta)>0\) and noticing that \(\mm(B_r(x)\cap E)\to 0\) as \(r\to 0\),
we can extract a (not relabeled) subsequence of \((r_n)_n\) for which
\begin{equation}\label{eq:key_lemma_aux}
r_n<\delta,\qquad\mm(B_{r_n}(x)\cap E)<\mm(B_\delta(y)\setminus E),\qquad\mathcal L^1(A_{E,r_n}^{x,y})\geq\frac{r_n}{2},
\end{equation}
for every \(n\in\N\). Now let \(n\in\N\) be fixed. We claim that for any \(r\in A_n\coloneqq A_{E,r_n}^{x,y}\) there exists
\(s(r)\in[r,\delta)\) such that
\[
v_{E,x}(r)=\mm(B_r(x)\cap E)=\mm(B_{s(r)}(y)\setminus E)=w_{E,y}(s(r)).
\]
Indeed, if \(w_{E,y}(r)=v_{E,x}(r)\), then we can take \(s(r)\coloneqq r\). If \(w_{E,y}(r)\neq v_{E,x}(r)\),
then \(v_{E,x}(r)>w_{E,y}(r)\) by definition of \(A_n\), thus the continuity of \(w_{E,y}\) (which follows from
\eqref{eq:null ball boundary}) ensures that \(w_{E,y}(s(r))=v_{E,x}(r)\) for some \(s(r)>0\). Since \(w_{E,y}\) is non-decreasing,
we infer that \(s(r)\geq r\). Moreover, the second inequality in \eqref{eq:key_lemma_aux} implies that \(s(r)<\delta\).

Given any \(r\in A_n\), we define the Borel set \(E_r\subseteq\X\) as \(E_r\coloneqq(E\setminus B_r(x))\cup B_{s(r)}(y)\).
The first inequality in \eqref{eq:key_lemma_aux} ensures that \(\bar B_r(x)\cap\bar B_{s(r)}(y)=\varnothing\), whence it
follows that \(\mm\big(E_r\cap(\bar B_r(x)\cup\bar B_{s(r)}(y))\big)=\mm\big(E\cap(\bar B_r(x)\cup\bar B_{s(r)}(y))\big)\).
{
Denote $K\coloneqq\bar B_{2\delta}(x) \cup \bar B_{2\delta}(y)$ for brevity.
}
%Let $K\subseteq \X$ be a compact set such that $\bar B_{2\delta}(x) \cup \bar B_{2\delta}(y) \subseteq K$.
The assumption that \(E\) is a volume-constrained minimizer of the perimeter then implies that \(P(E,K)\leq P(E_r,K)\). For ease
of notation from now on we will denote $C_y(E)$ simply by $C_y$. Thanks to Proposition \ref{prop:formula_union}, Remark
\ref{rmk:bdry_null}, the deformation property, and \(s(r)\geq r\), we deduce that for \(\mathcal L^1\)-a.e.\ \(r\in A_n\) one has
\[\begin{split}
P(E,K)&\leq P(E_r,K) \\ &\leq P\big(E\cup B_{s(r)}(y),B_r(x)^{(0)}\cap K\big)+P\big(B_r(x),(E\cup B_{s(r)}(y))^{(1)}\cap K\big)\\
&= P(E\cup B_{s(r)}(y),K)-P\big(E\cup B_{s(r)}(y),\partial^e B_r(x)\cup B_r(x)^{(1)}\big)+P(B_r(x),E^{(1)})\\
&\leq P(E\cup B_{s(r)}(y),K)-P(E,B_r(x))+P(B_r(x),E^{(1)})\\
&\leq P(E,K)+C_y\frac{\mm(B_{s(r)}(y)\setminus E)}{s(r)}-P(E,B_r(x))+P(B_r(x),E^{(1)})\\
&=P(E,K)+C_y\frac{\mm(B_r(x)\cap E)}{s(r)}-P(E,B_r(x))+P(B_r(x),E^{(1)})\\
&\leq P(E,K)+C_y\frac{\mm(B_r(x)\cap E)}{r}-P(E,B_r(x))+P(B_r(x),E^{(1)}).
\end{split}\]
Notice that the constant \(C_y\) depends on \(y\) and \(E\), but neither on \(n\) nor on \(r\). Therefore, we have shown that
\begin{equation}\label{eq:key_lemma_aux2}
P(E,B_r(x))\leq C_y\frac{\mm(B_r(x)\cap E)}{r}+P(B_r(x),E^{(1)})\quad\text{ for all }n\in\N\text{ and }\mathcal L^1\text{-a.e.\ }r\in A_n.
\end{equation}
Now fix any \(\alpha>\max\{\log_2(C_D(\delta)),1\}\). We know from the relative isoperimetric inequality, i.e.\ Theorem \ref{thm:isoper_ineq}, that
\begin{equation}\label{eq:key_lemma_aux3}
P(E,B_r(x))\geq 2\tilde C\min\{v_{E,x}(r),w_{E,x}(r)\big\}^{1-\frac{1}{\alpha}}\frac{\mm(B_r(x))^\frac{1}{\alpha}}{r}
\quad\text{ for every }n\in\N\text{ and }r\in A_n,
\end{equation}
where we define \(\tilde C\coloneqq 1/2 C_I(\alpha,\delta)^{(\alpha-1)/\alpha}\). Exploiting the fact that \(x\in E^{(0)}\),
we can find \(\bar n\in\N\) such that
\begin{equation}\label{eq:key_lemma_aux4}
v_{E,x}(r)<w_{E,x}(r),\qquad C_y\bigg(\frac{\mm(B_r(x)\cap E)}{\mm(B_r(x))}\bigg)^{\frac{1}{\alpha}}\leq\tilde C
\quad\text{ for every }n\geq\bar n\text{ and }r\in A_n.
\end{equation}
By combining \eqref{eq:key_lemma_aux2}, \eqref{eq:key_lemma_aux3}, and \eqref{eq:key_lemma_aux4}, we deduce that
for every \(n\geq\bar n\) and \(\mathcal L^1\)-a.e.\ \(r\in A_n\) it holds that
\[\begin{split}
2\tilde C\mm(B_r(x)&\cap E)^{1-\frac{1}{\alpha}}\frac{\mm(B_r(x))^{\frac{1}{\alpha}}}{r}\\&\leq P(B_r(x),E^{(1)})+
\mm(B_r(x)\cap E)^{1-\frac{1}{\alpha}}C_y\bigg(\frac{\mm(B_r(x)\cap E)}{\mm(B_r(x))}\bigg)^{\frac{1}{\alpha}}\frac{\mm(B_r(x))^{\frac{1}{\alpha}}}{r}\\
&\leq P(B_r(x),E^{(1)})+\tilde C\mm(B_r(x)\cap E)^{1-\frac{1}{\alpha}}\frac{\mm(B_r(x))^{\frac{1}{\alpha}}}{r}.
\end{split}\]
Rearranging the terms, we infer that
\begin{equation}\label{eq:key_lemma_aux5}
\tilde C\frac{\mm(B_r(x))^{\frac{1}{\alpha}}}{r}\mm(B_r(x)\cap E)^{1-\frac{1}{\alpha}}\leq P(B_r(x),E^{(1)})
\quad\text{ for every }n\geq\bar n\text{ and }\mathcal L^1\text{-a.e.\ }r\in A_n.
\end{equation}
Now define the function \(f\colon(0,+\infty)\to\R\) as \(f(r)\coloneqq|{\bf D}\sfd_x|(B_r(x)\cap E^{(1)})\) for every \(r>0\).
Corollary \ref{cor:conseq_coarea} tells that \(f\) is locally absolutely continuous and \(f'(r)=P(B_r(x),E^{(1)})\) for
\(\mathcal L^1\)-a.e.\ \(r>0\). Moreover, Lemma \ref{lem:var_dist} gives \(f(r)\leq\mm(B_r(x)\cap E)\) for every \(r>0\).
Consequently, it follows from \eqref{eq:key_lemma_aux5} that
\begin{equation}\label{eq:key_lemma_aux6}
\tilde C\frac{\mm(B_r(x))^{\frac{1}{\alpha}}}{r}f(r)^{1-\frac{1}{\alpha}}\leq f'(r)
\quad\text{ for every }n\geq\bar n\text{ and }\mathcal L^1\text{-a.e.\ }r\in A_n.
\end{equation}
Using {that \(x\notin{\rm int}(E^{(0)})\), which is the contradiction assumption, and Lemma \ref{lem:var_dist}} we see that
\(f(r)\geq c\,\mm(B_r(x)\cap E)>0\) for every \(r>0\), thus we can divide both sides of \eqref{eq:key_lemma_aux6}
by \(\alpha f(r)^{1-\frac{1}{\alpha}}\), obtaining that
\begin{equation}\label{eq:key_lemma_aux7}
\frac{\tilde C}{\alpha}\frac{\mm(B_r(x))^{\frac{1}{\alpha}}}{r}\leq\frac{f'(r)}{\alpha f(r)^{1-\frac{1}{\alpha}}}
=(f^{\frac{1}{\alpha}})'(r)\quad\text{ for every }n\geq\bar n\text{ and }\mathcal L^1\text{-a.e.\ }r\in A_n.
\end{equation}
The third inequality in \eqref{eq:key_lemma_aux} implies that \(\mathcal L^1([r_n/4,r_n]\cap A_n)\geq r_n/4\) for every \(n\in\N\),
thus integrating \eqref{eq:key_lemma_aux7} (and taking into account that \((f^{1/\alpha})'(r)\geq 0\) holds for
\(\mathcal L^1\)-a.e.\ \(r>0\)) we get that
\[\begin{split}
\frac{\tilde C}{4\alpha}\mm(B_{r_n/4}(x))^{\frac{1}{\alpha}}&\leq\frac{\tilde C}{\alpha}\frac{\mm(B_{r_n/4}(x))^{\frac{1}{\alpha}}}{r_n}\mathcal L^1([r_n/4,r_n]\cap A_n)
 \leq\frac{\tilde C}{\alpha}\int_{[r_n/4,r_n]\cap A_n}\frac{\mm(B_r(x))^{\frac{1}{\alpha}}}{r}\,\d r\\
&\leq\int_{[r_n/4,r_n]\cap A_n}(f^{\frac{1}{\alpha}})'(r)\,\d r \leq\int_0^{r_n}(f^{\frac{1}{\alpha}})'(r)\,\d r \\ & =f(r_n)^{\frac{1}{\alpha}} \leq\mm(B_{r_n}(x)\cap E)^{\frac{1}{\alpha}}
\end{split}\]
for every \(n\geq\bar n\). Letting \(C\coloneqq\frac{1}{C_D(\delta)^2}\big(\frac{\tilde C}{4\alpha}\big)^\alpha\), we can conclude
that \(\mm(B_{r_n}(x)\cap E)\geq C\mm(B_{r_n}(x))\) for every \(n\geq\bar n\). This leads to a contradiction with the fact that
\(x\in E^{(0)}\). Therefore, the proof of the statement is achieved.
\end{proof}

Having Lemma \ref{lem:key_lemma} at our disposal, we can now easily prove Theorem \ref{thm:main}.
\begin{proof}[Proof of Theorem \ref{thm:main}]
Since \(E^{(1)}=(\X\setminus E)^{(0)}\), it is sufficient to check that \(E^{(0)}={\rm int}(E^{(0)})\). To prove it,
we argue by contradiction: suppose there exists a point \(x\in E^{(0)}\setminus{\rm int}(E^{(0)})\). This implies that
both \(\mm(E)>0\) (otherwise \(E^{(0)}=\X={\rm int}(E^{(0)})\)) and \(\mm(\X\setminus E)>0\) (otherwise \(E^{(0)}=\varnothing\)),
thus we know from Remark \ref{rmk:P_non_null} that \(P(E)\neq 0\). Since \(P(E,\cdot)\) is concentrated on \(\partial^e E\),
we can find a point \(z\in\partial^e E\). Notice that \(\mm(B_r(z)\cap E)>0\) and \(\mm(B_r(z)\setminus E)>0\) for all \(r>0\).
Since \(z\neq x\), we can fix some radius \(\delta\in(0,R_x(E))\cap(0,2 R_z(E)/3)\cap(0,\sfd(x,z)/3)\). Thanks to the fact that
\(\mm(B_{\delta/2}(z)\cap E)>0\), we can find a point \(y\in E^{(1)}\cap B_{\delta/2}(z)\). Notice that
\(B_{\delta/2}(z)\setminus E\subseteq B_\delta(y)\setminus E\), so that
\(\mm(B_\delta(y)\setminus E)\geq\mm(B_{\delta/2}(z)\setminus E)>0\). The fact that \(x\notin{\rm int}(E^{(0)})\) implies
that also \(\mm(B_\delta(x)\cap E)>0\). Hence, letting \(v_{E,x}\), \(w_{E,y}\) be defined in \eqref{eq:aux_v_w}, we have
proved that \(v_{E,x}(\delta)>0\) and \(w_{E,y}(\delta)>0\). By our construction and by the definition of $R_z(E),R_y(E)$
it holds \(2 R_z(E)/3\le R_y(E)\), hence we have \(\delta\in(0,R_y(E))\). Moreover, the inequality \(\delta<\sfd(x,z)/3\)
implies that \(\sfd(x,y)>2\sfd(x,z)/3>2\delta\), which means that \(\bar B_\delta(x)\cap\bar B_\delta(y)=\varnothing\).
All in all, we showed that item i) of Lemma \ref{lem:key_lemma} holds. Hence, fixed any sequence \((r_n)_n\subseteq(0,1)\)
with \(r_n\to 0\), we deduce from the assumption \(x\notin{\rm int}(E^{(0)})\) that item ii) of Lemma \ref{lem:key_lemma} fails.
Letting \(A_{E,r_n}^{x,y}\) be as in \eqref{eq:aux_A}, we get that
\begin{equation}\label{eq:main_aux}
\mathcal L^1(A_{E,r_n}^{x,y})\geq\frac{r_n}{2}\quad\text{ holds only for finitely many }n\in\N.
\end{equation}
Since \(A_{E,r_n}^{x,y}\cup A_{\X\setminus E,r_n}^{y,x}=(0,r_n)\) for every \(n\in\N\), we infer that
\(\mathcal L^1(A_{\X\setminus E,r_n}^{y,x})\geq r_n/2\) for infinitely many \(n\in\N\). Given that
\(v_{\X\setminus E,y}(\delta)=w_{E,y}(\delta)>0\) and \(w_{\X\setminus E,x}(\delta)=v_{E,x}(\delta)>0\), we are in a position
to apply Lemma \ref{lem:key_lemma} again, obtaining that \(y\in{\rm int}((\X\setminus E)^{(0)})={\rm int}(E^{(1)})\).
This gives some \(\bar r>0\) satisfying \(w_{E,y}(r)=0\) for every \(r\in(0,\bar r)\). On the other hand, we know from
\(x\notin{\rm int}(E^{(0)})\) that \(v_{E,x}(r)>0\) for all \(r\in(0,\bar r)\). Choosing \(\bar n\in\N\) so that \(r_n<\bar r\)
for all \(n\geq\bar n\), we conclude that \(A_{E,r_n}^{x,y}=(0,r_n)\) for every \(n\geq\bar n\), in contradiction with
\eqref{eq:main_aux}. This proves that \(E^{(0)}={\rm int}(E^{(0)})\).
\end{proof}
\begin{remark}[Some generalizations of Theorem \ref{thm:main}]\label{rmk:generalizations}{\rm
To keep the presentation of Theorem \ref{thm:main} as clear as possible, we decided not to prove it in its utmost generality.
However, below we discuss some generalizations of our result that can be obtained by slightly adapting our arguments.
The standing assumption is that \((\X,\sfd,\mm)\) is a length PI space.
\begin{itemize}
\item[\(\rm i)\)] By inspecting the proof of Lemma \ref{lem:key_lemma}, one can see that assuming the validity of a weaker notion
of deformation property is sufficient. Namely, one can allow for the constant \(C\) appearing in \eqref{eq:deform_1},
\eqref{eq:deform_2} to depend on \(y\) and it is sufficient to require the deformation property only for volume-constrained
minimizers \(E\) of the perimeter.
\item[\(\rm ii)\)] A localized version of Theorem \ref{thm:main} holds as well: let \(E\subseteq\X\) be a volume-constrained
minimizer of the perimeter in some open set \(\Omega\subseteq\X\) (i.e.\ as in Definition \ref{def:VolumeConstrained} but requiring
that $K\subseteq\Omega$ and with $P(\cdot)$ replaced by $P(\cdot,\Omega)$) satisfying \(P(E,\Omega)>0\). Then
\(E^{(1)}\cap\Omega\), \(E^{(0)}\cap\Omega\) are open sets and \(\partial E^{(1)}\cap\Omega=\partial E^{(0)}\cap\Omega=\partial^e E\cap\Omega\).
\item[\(\rm iii)\)] Theorem \ref{thm:main} can be generalized to volume-constrained minimizers of a suitable class of
\emph{quasi-perimeters}. Fix an open set \(\Omega\subseteq\X\) and a functional \(G\colon\mathscr B(\Omega)\to\R\cup\{+\infty\}\)
with \(G(\varnothing)<+\infty\) having the following property: for any \(U\Subset\Omega\) open, there exist constants \(C=C(U)>0\)
and \(\sigma=\sigma(U)\in\big(1-\frac{1}{\max\{1,\log_2(\inf C_D)\}},1\big]\) such that
\[
G(E)\leq G(F)+C\mm(E\Delta F)^\sigma\quad\text{ whenever }E,F\in\mathscr B(\Omega)\text{ satisfy }E\Delta F\subseteq U.
\]
We then define the quasi-perimeter \(\mathscr P_G\) restricted to \(\Omega\) as
\(\mathscr P_G(E,\Omega)\coloneqq P(E,\Omega)+G(E\cap\Omega)\) for every \(E\in\mathscr B(\Omega)\).
Then an adaption of the previous arguments yields the validity of the following statement: if \(E\subseteq\X\)
is a volume-constrained minimizer of the quasi-perimeter \(\mathscr P_G\) in \(\Omega\) (i.e.\ as in Definition
\ref{def:VolumeConstrained} but requiring that $K\subseteq\Omega$, and with $P(\cdot)$ replaced by $\mathscr P_G(\cdot,\Omega)$)
satisfying \(P(E,\Omega)>0\), then \(E^{(1)}\cap\Omega\) and \(E^{(0)}\cap\Omega\) are open sets, and it holds that
\(\partial E^{(1)}\cap\Omega=\partial E^{(0)}\cap\Omega=\partial^e E\cap\Omega\).
\fr
\end{itemize}
}\end{remark}

Once we know that volume-constrained minimizers of the perimeter have an open representative, we can obtain the following
expected boundary density estimates by suitably adapting the arguments in the proof of Lemma \ref{lem:key_lemma}.
\begin{theorem}[Boundary density estimates]\label{thm:density}
Let \((\X,\sfd,\mm)\) be a length PI space having the deformation property. Let \(E\subseteq\X\) be a volume-constrained minimizer
of the perimeter. Let \(B\subseteq\X\) be a given bounded set. Then there exist constants \(\bar r=\bar r(E,B,C_D,C_I)>0\) and
\(C=C(E,B,C_D,C_I)>1\) such that
\begin{equation}\label{eq:density_claim}
\frac{1}{C}\leq\frac{\mm(B_r(x)\cap E)}{\mm(B_r(x))}\leq 1-\frac1C,\qquad\frac{1}{C}\leq\frac{r P(E,B_r(x))}{\mm(B_r(x))}
\leq C,
\end{equation}
for every $x\in\partial^e E\cap B$ and $r\in(0,\bar r)$.

In particular, there exists a constant \(\tilde C = \tilde C(C,C_D(\bar r/2)) \geq 1\) such that
\begin{equation}\label{eq:density_claim2}
P(E,B_{2r}(x))\leq\tilde C\,P(E,B_r(x))\quad\text{ for every }x\in\partial^e E\cap B\text{ and }r\in(0,\bar r/2).
\end{equation}
\end{theorem}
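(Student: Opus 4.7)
The plan is to adapt the competitor-and-ODE argument at the heart of Lemma \ref{lem:key_lemma} into pointwise-in-$r$ density estimates that are moreover uniform in $x\in\partial^e E\cap B$. I will first obtain the upper perimeter density via a single volume-compensating competitor at a ``reference'' point, then derive the lower volume density from a differential inequality for the auxiliary function $f(r)\coloneqq|{\bf D}\sfd_x|(B_r(x)\cap E^{(1)})$, and finally read off the remaining estimates from the relative isoperimetric inequality and the doubling property.

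\emph{Choice of reference points.} Since $P(E)>0$ (otherwise $\partial^e E=\varnothing$ and the statement is vacuous), Theorem \ref{thm:main} guarantees that $E^{(1)}$ and $E^{(0)}$ are both non-empty open sets. For each $x\in\partial^e E\cap\overline B$ I would select $y_x\in E^{(1)}$ and $\delta_x\in(0,R_{y_x}(E))$ with $B_{\delta_x}(y_x)\subseteq E^{(1)}$, chosen close enough to $\partial^e E$ to ensure that $w_{E,y_x}$ attains positive values inside $B_{R_{y_x}(E)}(y_x)$; symmetrically, I pick $(z_x,\delta'_x)$ with $B_{\delta'_x}(z_x)\subseteq E^{(0)}$. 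A compactness argument on $\overline B$, combined with \eqref{eq:R_x_unif_cpt}, reduces the analysis to finitely many such pairs and produces constants depending only on $E$ and $B$. Shrinking $\bar r$ further, I can assume that $B_{\bar r}(x)$ is disjoint from each associated reference ball uniformly for $x\in B$.

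\emph{Key inequality and upper perimeter density.} For $x\in\partial^e E\cap B$ and $r\in(0,\bar r)$, let $s(r)$ be defined by $w_{E,y_x}(s(r))=v_{E,x}(r)$, so that $s(r)\in(\delta_x,R_{y_x}(E))$. The competitor $E_r\coloneqq(E\setminus B_r(x))\cup B_{s(r)}(y_x)$ is volume-preserving, hence volume-constrained minimality combined with Proposition \ref{prop:formula_union}, the $\mm$-negligibility of ball boundaries for a.e.\ radius (Remark \ref{rmk:bdry_null}), and the deformation property at $y_x$ reproduces verbatim the chain of inequalities in the proof of Lemma \ref{lem:key_lemma} and gives
\[
P(E,B_r(x))\leq\frac{C_{y_x}(E)}{s(r)}\,v_{E,x}(r)+P(B_r(x),E^{(1)})\leq\frac{C_{y_x}(E)}{\delta_x}\,\mm(B_r(x))+P(B_r(x))
\]
for a.e.\ $r\in(0,\bar r)$. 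Coupling this with the growth estimate $P(B_r(x))\leq C\mm(B_r(x))/r$, which is the deformation property applied to $E=\varnothing$ at $x$ and is uniform in $x\in B$, and multiplying through by $r/\mm(B_r(x))$, yields the upper perimeter density.

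\emph{Lower volume density and conclusion.} Plugging the key inequality into the relative isoperimetric inequality (Theorem \ref{thm:isoper_ineq}, with any $\alpha>\max\{\log_2 C_D(\bar r),1\}$) in the nontrivial case $v_{E,x}(r)\leq\mm(B_r(x))/2$ gives
\[
c\,(\mm(B_r(x))/r^{\alpha})^{1/\alpha}\,v_{E,x}(r)^{(\alpha-1)/\alpha}\leq\frac{C_{y_x}(E)}{\delta_x}v_{E,x}(r)+P(B_r(x),E^{(1)}).
\]
For $r$ smaller than a threshold depending only on $C_{y_x}(E), \delta_x$, and $\alpha$, the first term on the right is absorbed into the left, leaving $(c/2)(\mm(B_r(x))/r^{\alpha})^{1/\alpha}v_{E,x}(r)^{(\alpha-1)/\alpha}\leq f'(r)$ by Corollary \ref{cor:conseq_coarea}. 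The sandwich $c\,v_{E,x}(r)\leq f(r)\leq v_{E,x}(r)$ from Lemma \ref{lem:var_dist} then yields the differential inequality $(f^{1/\alpha})'(r)\geq c'\mm(B_r(x))^{1/\alpha}/r$; integrating from $r/2$ to $r$ and using doubling delivers $v_{E,x}(r)\geq f(r)\geq c''\mm(B_r(x))$. The upper volume density follows by running the same argument on $\X\setminus E$ using $z_x$; the lower perimeter density is immediate from Theorem \ref{thm:isoper_ineq} once $\min(v_{E,x},w_{E,x})\geq c\,\mm(B_r(x))$ is known; and \eqref{eq:density_claim2} results from pairing upper and lower perimeter densities with the doubling of $\mm$. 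The main obstacle is organizing the reference points so that the compensating radius $s(r)$ falls in the usable range of the deformation property uniformly in $x\in B$, which is exactly what the compactness-and-covering step above achieves.
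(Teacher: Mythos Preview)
Your proposal follows the same overall strategy as the paper: derive the key inequality \(P(E,B_r(x))\leq C'\,\mm(B_r(x)\cap E)+P(B_r(x),E^{(1)})\) via a volume-compensating competitor at an interior reference point, feed it into the relative isoperimetric inequality to obtain a differential inequality for an auxiliary function built from \(|{\bf D}\sfd_x|\), and integrate over a dyadic interval. The upper perimeter bound and the doubling estimate \eqref{eq:density_claim2} are handled exactly as you indicate.

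Two remarks. First, your reference-point selection via compactness is more elaborate than needed: the paper picks just \emph{two} fixed points \(y,\tilde y\in{\rm int}(E^{(1)})\) near two distinct points \(z,\tilde z\in\partial^e E\); since \(y\) and \(\tilde y\) are separated by at least \(4\rho\), every \(x\) lies at distance \(>2\rho\) from at least one of them, which gives the required disjointness uniformly in \(x\) without any covering.

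Second, there is a genuine (if easily patched) gap in your integration step. Your differential inequality \((f^{1/\alpha})'(r)\geq c'\mm(B_r(x))^{1/\alpha}/r\) is established only at radii where \(v_{E,x}(r)\leq\mm(B_r(x))/2\), yet you integrate over all of \([r/2,r]\). The patch is a case split: if some \(s\in[r/2,r]\) has \(v_{E,x}(s)>\mm(B_s(x))/2\), then \(v_{E,x}(r)\geq v_{E,x}(s)>\mm(B_{r/2}(x))/2\geq\mm(B_r(x))/(2C_D)\) and you are done; otherwise integrate. The paper avoids the split altogether by working with
\[
f_x(r)\coloneqq\min\big\{|{\bf D}\sfd_x|(B_r(x)\cap E^{(1)}),\,|{\bf D}\sfd_x|(B_r(x)\cap E^{(0)})\big\};
\]
using the key inequality for both \(E\) and \(\X\setminus E\) at once, one obtains the differential inequality for \(f_x\) on \emph{all} of \((0,\bar r)\), so the integration is unobstructed and yields the lower bound on \(\min\{v_{E,x},w_{E,x}\}\) in one stroke.
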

\begin{proof}
If \(\partial^e E\) contains only one point, the first one in \eqref{eq:density_claim} follows by the definition \(\partial^e E \),
while the second follows from \cite[Theorem 5.4]{amb01}. Thus we can assume that \(\partial^e E\) contains at least two distinct
points \(z\) and \(\tilde z\), otherwise there is nothing to prove. In particular, letting
\(\rho\coloneqq\min\big\{R_z(E),R_{\tilde z}(E),\frac{1}{5}\sfd(z,\tilde z)\big\}\),
we can find two points \(y\in B_{\rho/2}(z)\cap E^{(1)}\) and \(\tilde y\in B_{\rho/2}(\tilde z)\cap E^{(1)}\). In fact, Theorem
\ref{thm:main} ensures that \(y,\tilde y\in{\rm int}(E^{(1)})\), so that there exists \(r_0\in(0,\rho)\) such that
\begin{equation}\label{eq:density}
\mm(B_{r_0}(y)\setminus E)=\mm(B_{r_0}(\tilde y)\setminus E)=0.
\end{equation}
Notice that \(\mm(B_\rho(y)\setminus E)\geq\mm(B_{\rho/2}(z)\setminus E)>0\) and similarly \(\mm(B_\rho(\tilde y)\setminus E)>0\).
The doubling assumption ensures that the closure \(K\) of \(B\) is compact, thus an application of Dini's theorem yields the
existence of \(r_1>0\) such that
\begin{equation}\label{eq:density2}
\mm(B_r(x)\cap E)<\min\big\{\mm(B_\rho(y)\setminus E),\mm(B_\rho(\tilde y)\setminus E)\big\}
\quad\text{ for every }x\in K\text{ and }r\in(0,r_1).
\end{equation}
%Again by compactness of \(K\),
{
Thanks to \eqref{eq:R_x_unif_cpt},}
we can also find \(r_2>0\) such that \(r_2<R_y(E)\), \(r_2<R_{\tilde y}(E)\), and
{\(r_2<R_x(\varnothing)\) hold for every \(x\in K\)}. Now define \(\bar r_0\coloneqq\min\{r_0,r_1,r_2\}>0\).
Let \(x\in\partial^e E\cap B\) be fixed. Our choice of \(\rho\) ensures that \(\bar B_\rho(x)\) is disjoint from
at least one between \(\bar B_\rho(y)\) and \(\bar B_\rho(\tilde y)\). Up to relabeling \(y\) and \(\tilde y\), say that
\(\bar B_\rho(x)\cap\bar B_\rho(y)=\varnothing\). Given any \(r\in(0,\bar r_0)\), we deduce from \eqref{eq:density},
\eqref{eq:density2}, and the continuity of \(s\mapsto\mm(B_s(y)\setminus E)\) that there exists \(s(r)\in(\bar r_0,\rho)\)
such that \(\mm(B_r(x)\cap E)=\mm(B_{s(r)}(y)\setminus E)\). Define the Borel set \(E_r\subseteq\X\) as
\(E_r\coloneqq(E\setminus B_r(x))\cup B_{s(r)}(y)\). By the minimality assumption on \(E\), arguing as we did in the proof of Lemma \ref{lem:key_lemma} we obtain
\begin{equation}\label{eq:density3}
P(E,B_r(x))\leq\max\{C_z,C_{\tilde z}\}\frac{\mm(B_r(x)\cap E)}{\bar r_0}+P(B_r(x),E^{(1)}),
\end{equation}
for any $x\in\partial^e E\cap B$ and $\mathcal L^1\text{-a.e.\ }r\in(0,\bar r_0)$.
For any \(x\in\partial^e E\cap B\), define
\(A_x(E)\coloneqq\big\{r>0\,:\,|{\bf D}\sfd_x|(B_r(x)\cap E)\leq|{\bf D}\sfd_x|(B_r(x)\setminus E)\big\}\).
Fix \(\alpha>\max\{\log_2(C_D(\rho)),1\}\). Applying the relative isoperimetric inequality to the left-hand side
of \eqref{eq:density3} and using Lemma \ref{lem:var_dist}, we deduce that
\begin{equation}\label{eq:density4}\begin{split}
2 C_0\frac{\mm(B_r(x))^{\frac{1}{\alpha}}}{r}&|{\bf D}\sfd_x|(B_r(x)\cap E)^{1-\frac{1}{\alpha}}\\
&=2 C_0\frac{\mm(B_r(x))^{\frac{1}{\alpha}}}{r}\min\big\{|{\bf D}\sfd_x|(B_r(x)\cap E),|{\bf D}\sfd_x|(B_r(x)\setminus E)\big\}^{1-\frac{1}{\alpha}}\\
&\leq 2 C_0\frac{\mm(B_r(x))^{\frac{1}{\alpha}}}{r}\min\big\{\mm(B_r(x)\cap E),\mm(B_r(x)\setminus E)\big\}^{1-\frac{1}{\alpha}}\\
&\leq P(B_r(x),E^{(1)})+\frac{\max\{C_z,C_{\tilde z}\}}{c^{1-\frac{1}{\alpha}}}\frac{r}{\bar r_0}\frac{\mm(B_r(x))^{\frac{1}{\alpha}}}{r}
|{\bf D}\sfd_x|(B_r(x)\cap E)^{1-\frac{1}{\alpha}}
\end{split}\end{equation}
holds for \(\mathcal L^1\)-a.e.\ \(r\in(0,\bar r_0)\cap A_x(E)\), where we set
\(C_0\coloneqq 1/\big(2\,C_I(\alpha,\rho)^{(\alpha-1)/\alpha}\big)\) for brevity. Therefore, if we let
\[
\bar r\coloneqq\min\bigg\{\frac{c^{1-\frac{1}{\alpha}}C_0\bar r_0}{\max\{C_z,C_{\tilde z}\}},\bar r_0\bigg\}\in(0,\bar r_0],
\]
then we infer from \eqref{eq:density4} that
\begin{equation}\label{eq:density5}
C_0\frac{\mm(B_r(x))^{\frac{1}{\alpha}}}{r}|{\bf D}\sfd_x|(B_r(x)\cap E)^{1-\frac{1}{\alpha}}\leq P(B_r(x),E^{(1)})
\quad\text{ for }\mathcal L^1\text{-a.e.\ }r\in(0,\bar r)\cap A_x(E).
\end{equation}
This also proves (by considering \(\X\setminus E\) instead of \(E\)) that, up to shrinking \(\bar r>0\), it holds that
\begin{equation}\label{eq:density6}
C_0\frac{\mm(B_r(x))^{\frac{1}{\alpha}}}{r}|{\bf D}\sfd_x|(B_r(x)\setminus E)^{1-\frac{1}{\alpha}}\leq P(B_r(x),E^{(0)})
\quad\text{ for }\mathcal L^1\text{-a.e.\ }r\in(0,\bar r)\cap A_x(\X\setminus E).
\end{equation}
Let us now define the function \(f_x\colon(0,+\infty)\to\R\) as
\[
f_x(r)\coloneqq\min\big\{|{\bf D}\sfd_x|(B_r(x)\cap E^{(1)}),|{\bf D}\sfd_x|(B_r(x)\cap E^{(0)})\big\}\quad\text{ for every }r>0.
\]
Corollary \ref{cor:conseq_coarea} ensures that \(f_x\) is locally absolutely continuous and
\[
f'_x(r)=\left\{\begin{array}{ll}
P(B_r(x),E^{(1)})\\
P(B_r(x),E^{(0)})
\end{array}\quad\begin{array}{ll}
\text{ for }\mathcal L^1\text{-a.e.\ }r\in A_x(E),\\
\text{ for }\mathcal L^1\text{-a.e.\ }r\in A_x(\X\setminus E).
\end{array}\right.
\]
Observe that \(A_x(E)\cup A_x(\X\setminus E)=(0,+\infty)\). Arguing as in Lemma \ref{lem:key_lemma},
we deduce from \eqref{eq:density5} and \eqref{eq:density6} that
\begin{equation}\label{eq:density7}
\frac{C_0}{\alpha}\frac{\mm(B_r(x))^{\frac{1}{\alpha}}}{r}\leq(f_x^{\frac{1}{\alpha}})'(r)
\quad\text{ for }\mathcal L^1\text{-a.e.\ }r\in(0,\bar r).
\end{equation}
Given any \(r\in(0,\bar r)\), we can integrate the inequality in \eqref{eq:density7} over the interval \([r/2,r]\), thus obtaining that
\begin{equation}\label{eq:density8}\begin{split}
\frac{C_0}{2\alpha\,(C_D(\bar r/2))^{\frac{1}{\alpha}}}&\mm(B_r(x))^{\frac{1}{\alpha}} \leq\frac{C_0}{\alpha}\frac{\mm(B_{r/2}(x))^{\frac{1}{\alpha}}}{r}\frac{r}{2} \leq
\frac{C_0}{\alpha}\int_{r/2}^r\frac{\mm(B_s(x))^{\frac{1}{\alpha}}}{s}\,\d s\\&\leq\int_0^r(f_x^{\frac{1}{\alpha}})'(s)\,\d s=f_x(r)^\frac{1}{\alpha} \leq\min\big\{\mm(B_r(x)\cap E),\mm(B_r(x)\setminus E)\big\}^{\frac{1}{\alpha}}.
\end{split}\end{equation}
It follows that \(\mm(B_r(x))\leq C_1\mm(B_r(x)\cap E)\) for every \(x\in\partial^e E\cap B\) and \(r\in(0,\bar r)\),
where we define \(C_1\coloneqq C_D(\bar r/2)\big(\frac{2\alpha}{C_0}\big)^\alpha\).

Let \(x\in\partial^e E\cap B\) and \(r\in(0,\bar r)\) be fixed. Since
\(\mm(B_r(x))\leq C_1\min\big\{\mm(B_r(x)\cap E),\mm(B_r(x)\setminus E)\big\}\) by \eqref{eq:density8},
by using the relative isoperimetric inequality, and recalling that \(2C_0=1/C_I(\alpha,\rho)^{(\alpha-1)/\alpha}\), we get that
\[
\frac{2C_0}{C_1^{1-\frac{1}{\alpha}}}\frac{\mm(B_r(x))}{r}\leq
2C_0\frac{\mm(B_r(x))^{\frac{1}{\alpha}}}{r}\min\big\{\mm(B_r(x)\cap E),\mm(B_r(x)\setminus E)\big\}^{1-\frac{1}{\alpha}}\leq P(E,B_r(x)).
\]
On the other hand,
%since we can assume that \(C_2\coloneqq\sup_{x\in B}C_x(\varnothing)<+\infty\) by the compactness of \(K\),
{up to shrinking \(\bar r\) (depending only on \(B\)), we can find a constant \(C_2>0\) (depending only on \(B\)) such that
\(P(B_{\tilde r}(x))\leq C_2\frac{\mm(B_{\tilde r}(x))}{\tilde r}\) for every \(\tilde r\in(0,\bar r)\); recall the discussion after \eqref{eq:R_x_unif_cpt}. Then}
\[\begin{split}
P(E,B_r(x))&\leq P(E,B_{\tilde r}(x)) \leq\max\{C_z,C_{\tilde z}\}\frac{\mm(B_{\tilde r}(x)\cap E)}{\tilde r}+P(B_{\tilde r}(x),E^{(1)})
\\&\leq\max\{C_z,C_{\tilde z}\}\frac{\mm(B_{\tilde r}(x))}{\tilde r}+P(B_{\tilde r}(x)) \leq\big(\max\{C_z,C_{\tilde z}\}+C_2\big)\frac{\mm(B_{\tilde r}(x))}{\tilde r},
\end{split}\]
for \(\mathcal L^1\)-a.e.\ \(\tilde r\in(r,\bar r)\), thanks to \eqref{eq:density3} and to the deformation property.
Hence, \(\frac{r P(E,B_r(x))}{\mm(B_r(x))}\leq\max\{C_z,C_{\tilde z}\}+C_2\) for all \(x\in\partial^e E\cap B\) and
\(r\in(0,\bar r)\). Picking
\[
C\coloneqq\max\big\{C_1,C_1^{(\alpha-1)/\alpha}/(2C_0),\max\{C_z,C_{\tilde z}\}+C_2\big\},
\]
we conclude that \eqref{eq:density_claim} holds. Finally, applying \eqref{eq:density_claim} we conclude that for every
\(x\in\partial^e E\cap B\) and \(r\in(0,\bar r/2)\) it holds that
\[
\frac{P(E,B_{2r}(x))}{P(E,B_r(x))}\leq\frac{C\mm(B_{2r}(x))}{2r}\frac{Cr}{\mm(B_r(x))}\leq\frac{C^2 C_D(\bar r/2)}{2},
\]
which proves the validity of \eqref{eq:density_claim2}. Consequently, the statement is achieved.
\end{proof}
We conclude with a final comment on further minimality properties satisfied by volume-constrained minimizers. Such properties can be derived by reproducing well-known arguments, see, e.g. \cite[Remark 3.23, Theorem 3.24]{AntonelliPasqualettoPozzetta21}, exploiting Theorem \ref{thm:main} and the deformation property.
\begin{remark}\label{rem:LambdaQuasiMinimality}
Let \((\X,\sfd,\mm)\) be a length PI space having the deformation property. Let \(E\subseteq\X\) be a volume-constrained
minimizer of the perimeter. Using Theorem \ref{thm:main} and with arguments similar to those in the proof of Theorem
\ref{thm:density}, it is possible to prove that for any compact set $K\subseteq \X$ there exist $\Lambda, r_0>0$ such that
$E$ is a $(\Lambda, r_0)$-perimeter minimizer on $K$, i.e.\ whenever $F\Delta E \subseteq B_r(x)$ for some
$x \in K$ and $r<r_0$ it holds $P(E,B_r(x)) \le P(F,B_r(x)) + \Lambda\,\mm(E\Delta F)$.

Moreover, for any given compact set $K\subseteq \X$ there exist constants $L, r_0>0$ such that $E$ is $(L, r_0)$-quasi minimal
on $K$, i.e.\ whenever $F\Delta E \subseteq B_r(x)$ for some $x \in K$ and $r<r_0$ it holds that $P(E,B_r(x)) \le L\, P(F,B_r(x))$.
The class of quasi-minimal sets has been studied e.g.\ in \cite{KKLS13}.

It is worth pointing out that, once we know that volume-constrained minimizers of the perimeter are \((L,r_0)\)-quasi minimal sets,
Theorem \ref{thm:density} follows directly from \cite[Theorem 4.2 and Lemma 5.1]{KKLS13}. Nevertheless, we opted for a
self-contained proof of Theorem \ref{thm:density}, which takes advantage of the openness of volume-constrained minimizers.
\fr
\end{remark}
\section{Boundedness of isoperimetric sets}
In this last section, we prove the boundedness of isoperimetric sets in length PI spaces satisfying the deformation property
and with a uniform lower bound on the volume of unit balls  (Theorem \ref{thm:isoper_bdd}). The argument makes use of the
topological regularity given by our main result Theorem \ref{thm:main}.
\begin{proof}[Proof of Theorem \ref{thm:isoper_bdd}]
Suppose by contradiction that $E$ has no  bounded representatives, i.e.\ $\mea(E\setminus B_R(x))>0$ for all $R>0$ and $x \in \X.$
In particular $\X$ is unbounded and, since 
\[
v_0\coloneqq\inf_{x \in \X} \mea(B_1(x))>0,
\]
we have $\mea(\X)=\infty$ and
$\mea(\X\setminus E)>0$. Then $P(E)>0$ and, arguing as in the proof of Theorem \ref{thm:main}, we can find $y \in E^{(1)}$
and $\rho\in(0,R_y(E))$ such that $\delta\coloneqq \mea(B_\rho(y)\setminus E)>0$. By Theorem \ref{thm:main} it holds that
$y \in \Int(\eone)$, i.e.\ there exists $r_0>0$ such that $\mea(B_{r_0}(y)\setminus E)=0.$ { We consider the function $f: (0,+\infty) \to \mathbb R$ defined by
\[
f(R)\coloneqq |{\bf D} \sfd_y|(\eone \setminus  B_R(y))=|{\bf D} \sfd_y|(\eone)-|{\bf D} \sfd_y|(\eone\cap B_R(y)),
\]
and observe that $|{\bf D} \sfd_y|(\eone)\le \lip(\sfd_y) \mea(\eone)=\mea(E)<+\infty.$ By Corollary \ref{cor:conseq_coarea} the function $f$ is locally absolutely continuous and satisfies $f'(r)=-P(B_R(x),\eone)$ for $\mathcal L^1$-a.e.\ $R>0.$ 
Thanks to Lemma \ref{lem:var_dist} and since  \(\mm(E^{(1)}\Delta E)=0\), there also exists a constant $c>0$ such that 
\begin{equation}\label{eq:comparable md}
    0<c\,\mea (E\setminus B_R(y))\le f(R)\le \mea (E\setminus B_R(y)), \quad \forall\, R>0.
\end{equation}
}
Observe that, since $\mea(E)<+\infty$,
it holds $\mea(E\setminus B_R(y))\to 0$ as $R\to +\infty$. { Hence  $f(R)\to 0$ as  $R\to +\infty$
and so we can find $R_0>\rho$ such that $f(R)<\min\{\delta,v_0/2\}$ for all $R\ge R_0.$
}
% Moreover, by the coarea formula and recalling
% \eqref{eq:total variation and m}, we have that
% \[
% \int_{R}^{2R} P(B_t(y),\eone)\,\d t\le \int_{\rr} P(B_t(y),\eone\setminus B_{R}(y))\,\d t=
% |{\bf D}\sfd_y|(E^{(1)}\setminus B_R(y))\le \mea(E\setminus B_R(y)),
% \]
% for every  $R>0$, where in the first inequality we used the fact that the measure $P(B_t(y),\cdot)$ is concentrated on $\partial B_t(y)$.
% In particular, for all $R> 1$ the set 
% \[
% \big\{t \in (R,2R)\,:\,P(B_t(y),\eone)\le 2\,\mea(E\setminus B_R(y))\big\}
% \]
% has positive measure.
% Therefore, there exists a sequence $R_n\to +\infty$, with $R_n> \rho,$ such that 
% \begin{equation}
% \begin{split}
% P(B_{R_n}(y),\eone)\le 2\,&\mea(E\setminus B_{R_n}(y)),\quad 0<\mea( E \setminus B_{R_n}(y))<\min\{\delta,v_0/2\},\\
%  &P(E,\partial B_{R_n}(y))=0\quad\text{ for every }n \in \nn.
% \end{split}
% \end{equation}
By continuity, for every {$R\ge R_0$ there exists $r(R)\in(0,\rho)$ such that 
\begin{equation}\label{eq:equal n measure}
\mea(B_{r(R)}(y)\setminus E)=\mea( E \setminus B_{R}(y)).
\end{equation}}
For every {$R\ge R_0$ we define the set $F_R\coloneqq (E\cup B_{r(R)}(y))\cap B_{R}(y)$, which satisfies $\mea(F_R)=\mea(E)$
thanks to \eqref{eq:equal n measure} and $r(R)<R$}. Hence, by minimality, {$P(E)\le P(F_R)$ for every $R\ge R_0$.}
Moreover, using Proposition \ref{prop:formula_union} and the deformation property, for $\mathcal L^1$-a.e.\ $R\ge R_0$ we have
{
\begin{align*}
P(E)\le P(F_R)&=P((E\cup B_{r(R)}(y))\cap B_{R}(y))\\&\le P(E\cup B_{r(R)}(y),B_{R}(y)^{(1)})+P(B_{R}(y),(E\cup B_{r(R)}(y))^{(1)})\\
&\le P(E\cup B_{r(R)}(y))-P(E\cup B_{r(R)}(y),B_{R}(y)^{(0)})+P(B_{R}(y),\eone)\\
&\le P(E)+C_y(E)\frac{\mea(B_{r(R)}(y)\setminus E)}{r_0}-P(E,B_{R}(y)^{(0)})+P(B_{R}(y),\eone)\\
&\le P(E)+C_y(E)\frac{\mea(B_{r(R)}(y)\setminus E)}{r_0}-P(E\setminus B_{R}(y))+2P(B_{R}(y),\eone)\\
&\le P(E)+C_y(E)\frac{\mea(E\setminus B_{R}(y))}{r_0}-C v_0^\frac1\alpha\mea(E\setminus B_{R}(y))^{\frac{\alpha-1}{\alpha}}+2P(B_{R}(y),\eone),
\end{align*}
with $C>0, \alpha>1$ constants independent of $R,$}
where in the fifth line we used again Proposition \ref{prop:formula_union} and in the last line we used the isoperimetric
inequality for small volumes in Proposition \ref{thm:isop_small} (recall that $\mea( E \setminus B_{R}(y))<v_0/2$). {This combined with \eqref{eq:comparable md}  shows that 
\[
2f'(R)\le C_y(E)c^{-1}r_0^{-1} f(R)-C v_0^\frac1\alpha  f(R)^{\frac{\alpha-1}{\alpha}}\le -C_1f(R)^{\frac{\alpha-1}{\alpha}}, \quad \text{for a.e. } R\ge R_1,
\]
for some constant $R_1\ge R_0$ big enough  and where $C_1>0$ is a constant independent of $R$. Note that in the last inequality we used that $f(R)\to 0$ as $R\to +\infty$  and $\alpha>1.$  Since $f(R)>0$ for all $R>0$, this shows that
\[
(f^\frac{1}{\alpha})'(R)\le -\frac{C_1}{2\alpha }, \quad \text{for a.e. } R\ge R_1,
\]
which contradicts the fact that $f(R)$ is strictly positive for any $R>0$.
}
%  Hence, since $\mea(E\setminus B_{R}(y))\to 0$ and
% $\mea(E\setminus B_{R}(y))>0$, we obtain that for $n\in\nn$ big enough it holds that $P(E)<P(E),$ which is a contradiction.
\end{proof}
%
%
% \bigskip
% %
% \textbf{Statements and Declarations}

% \bigskip

% \textbf{Funding:} The third author is partially supported by the INdAM - GNAMPA Project 2022 CUP\_ E55F22000270001 ``Isoperimetric problems: variational and geometric aspects''.
% The fourth author is supported by the Academy of Finland project \emph{Incidences on Fractals}, Grant No.\ 321896.

% \textbf{Data Availibility Statement:} This research does not have any associated data.

% %
% %
% %
% \textbf{Competing interests:} The authors declare that they have no conflict of interest.
%
\small


\begin{thebibliography}{10}

\bibitem{AgostinianiFogagnoloMazzieri}
{\sc V.~Agostiniani, M.~Fogagnolo, and L.~Mazzieri}, {\em Sharp geometric
  inequalities for closed hypersurfaces in manifolds with nonnegative {R}icci
  curvature}, Invent. Math., 222 (2020), pp.~1033--1101.

\bibitem{AlmgrenBook}
{\sc F.~J. Almgren, Jr.}, {\em Existence and regularity almost everywhere of
  solutions to elliptic variational problems with constraints}, Mem. Amer.
  Math. Soc., 4 (1976), pp.~viii+199.

\bibitem{amb01}
{\sc L.~Ambrosio}, {\em Fine properties of sets of finite perimeter in doubling
  metric measure spaces}, Set Valued Analysis, 10 (2002), pp.~111--128.

\bibitem{AmbICM}
{\sc L.~Ambrosio}, {\em Calculus, heat flow and curvature-dimension bounds in
  metric measure spaces}, in Proceedings of the {I}nternational {C}ongress of
  {M}athematicians---{R}io de {J}aneiro 2018. {V}ol. {I}. {P}lenary lectures,
  World Sci. Publ., Hackensack, NJ, 2018, pp.~301--340.

\bibitem{Ambrosio-Pinamonti-Speight15}
{\sc L.~Ambrosio, A.~Pinamonti, and G.~Speight}, {\em Tensorization of
  {C}heeger energies, the space {$H^{1,1}$} and the area formula for graphs},
  Adv. Math., 281 (2015), pp.~1145--1177.

\bibitem{AntonelliPasqualettoPozzetta21}
{\sc G.~Antonelli, E.~Pasqualetto, and M.~Pozzetta}, {\em Isoperimetric sets in
  spaces with lower bounds on the {R}icci curvature}, Nonlinear Anal., 220
  (2022), p.~112839.

\bibitem{APPSb}
{\sc G.~Antonelli, E.~Pasqualetto, M.~Pozzetta, and D.~Semola}, {\em Asymptotic
  isoperimetry on non collapsed spaces with lower {R}icci bounds}. Math. Ann. (2023). \url{https://doi.org/10.1007/s00208-023-02674-y}
% \newblock Accepted in \textit{Mathematische Annalen}. Preprint, arXiv:2208.03739, 2022.

\bibitem{BadreddineRifford}
{\sc Z.~Badreddine and L.~Rifford}, {\em Measure contraction properties for
  two-step analytic sub-{R}iemannian structures and {L}ipschitz {C}arnot
  groups}, Ann. Inst. Fourier (Grenoble), 70 (2020), pp.~2303--2330.

\bibitem{BaloghKristaly}
{\sc Z.~M. Balogh and A.~Krist\'{a}ly}, {\em Sharp geometric inequalities in
  spaces with nonnegative {R}icci curvature and {E}uclidean volume growth},
\newblock Math. Ann. 385, 1747–1773 (2023).

\bibitem{BarilariRizziMCP}
{\sc D.~Barilari and L.~Rizzi}, {\em Sharp measure contraction property for
  generalized {H}-type {C}arnot groups}, Commun. Contemp. Math., 20 (2018),
  pp.~1750081, 24.

\bibitem{SubLaplacianComparison}
{\sc F.~Baudoin, E.~Grong, K.~Kuwada, and A.~Thalmaier}, {\em Sub-{L}aplacian
  comparison theorems on totally geodesic {R}iemannian foliations}, Calc. Var.
  Partial Differential Equations, 58 (2019), pp.~Paper No. 130, 38.

\bibitem{Bjorn-Bjorn11}
{\sc A.~Bj{\"o}rn and J.~Bj{\"o}rn}, {\em Nonlinear potential theory on metric
  spaces}, vol.~17 of EMS Tracts in Mathematics, European Mathematical Society
  (EMS), Z\"urich, 2011.

\bibitem{Brendle21}
{\sc S.~Brendle} (2023), {\em Sobolev inequalities in manifolds with nonnegative
  curvature}, 
\newblock Comm. Pure Appl. Math., 76: 2192-2218.


\bibitem{BPS23}
{\sc E.~Bru\`e, E.~Pasqualetto and D.~Semola}, {\em Rectifiability of the reduced boundary for sets of finite
              perimeter over {${\rm RCD}(K,N)$} spaces}, 	J. Eur. Math. Soc., 25 (2023), no.~2,
  pp.~413--465.



\bibitem{Buser82}
{\sc P.~Buser}, {\em A note on the isoperimetric constant}, Ann. Sci. \'{E}cole
  Norm. Sup. (4), 15 (1982), pp.~213--230.

\bibitem{CavallettiManini}
{\sc F.~Cavalletti and D.~Manini}, {\em Isoperimetric inequality in noncompact
  {$\mathrm{MCP}$} spaces}, Proc. Amer. Math. Soc., 150 (2022), pp.~3537--3548.

\bibitem{CavMan22}
\leavevmode\vrule height 2pt depth -1.6pt width 23pt, {\em Rigidities of
  {I}soperimetric inequality under nonnegative {R}icci curvature}.
\newblock Preprint, arXiv:2207.03423, 2022.

\bibitem{CavallettiMondinoAnlPDE}
{\sc F.~Cavalletti and A.~Mondino}, {\em New formulas for the {L}aplacian of
  distance functions and applications}, Anal. PDE, 13 (2020), pp.~2091--2147.

\bibitem{ChavelIsoperimetricBook01}
{\sc I.~Chavel}, {\em Isoperimetric inequalities}, vol.~145 of Cambridge Tracts
  in Mathematics, Cambridge University Press, Cambridge, 2001.

\bibitem{ChavelFeldman91}
{\sc I.~Chavel and E.~A. Feldman}, {\em Modified isoperimetric constants, and
  large time heat diffusion in {R}iemannian manifolds}, Duke Math. J., 64
  (1991), pp.~473--499.

\bibitem{CintiPratelli}
{\sc E.~Cinti and A.~Pratelli}, {\em The {$\varepsilon-\varepsilon^\beta$}
  property, the boundedness of isoperimetric sets in {$\mathbb{R}^N$} with
  density, and some applications}, J. Reine Angew. Math., 728 (2017),
  pp.~65--103.

\bibitem{ComiMagnani}
{\sc G.~E. Comi and V.~Magnani}, {\em The {G}auss-{G}reen theorem in stratified
  groups}, Adv. Math., 360 (2020), pp.~106916, 85.

\bibitem{CoulhonSaloffCoste95}
{\sc T.~Coulhon and L.~Saloff-Coste}, {\em Vari\'{e}t\'{e}s riemanniennes
  isom\'{e}triques \`a l'infini}, Rev. Mat. Iberoamericana, 11 (1995),
  pp.~687--726.

\bibitem{GalliRitore}
{\sc M.~Galli and M.~Ritor\'{e}}, {\em Existence of isoperimetric regions in
  contact sub-{R}iemannian manifolds}, J. Math. Anal. Appl., 397 (2013),
  pp.~697--714.

\bibitem{GonzalezMassariTamanini}
{\sc E.~Gonzalez, U.~Massari, and I.~Tamanini}, {\em On the regularity of
  boundaries of sets minimizing perimeter with a volume constraint}, Indiana
  University Mathematics Journal, 32 (1983), pp.~25--37.

\bibitem{hajkosk}
{\sc P.~Haj{\l}asz and P.~Koskela}, {\em Sobolev met {P}oincar{\'e}}, Mem.
  Amer. Math. Soc., 145 (2000).

\bibitem{Heinonen01}
{\sc J.~Heinonen}, {\em Lectures on {A}nalysis on {M}etric {S}paces},
  Universitext, Springer-Verlag, New York, 2001.

\bibitem{HKM06}
{\sc J.~Heinonen, T.~Kilpel\"{a}inen, and O.~Martio}, {\em Nonlinear potential theory of degenerate elliptic equations},
Mineola, NY , pp. xii+404. Dover Publications Inc., 2006.

\bibitem{HKST15}
{\sc J.~Heinonen, P.~Koskela, N.~Shanmugalingam, and J.~T. Tyson}, {\em Sobolev
  spaces on metric measure spaces}, vol.~27 of New Mathematical Monographs,
  Cambridge University Press, Cambridge, 2015.

\bibitem{Kanai86}
{\sc M.~Kanai}, {\em Rough isometries and the parabolicity of {R}iemannian
  manifolds}, J. Math. Soc. Japan, 38 (1986), pp.~227--238.

\bibitem{KKLS13}
{\sc J.~Kinnunen, R.~Korte, A.~Lorent, and N.~Shanmugalingam}, {\em Regularity
  of sets with quasiminimal boundary surfaces in metric spaces}, J. Geom.
  Anal., 23 (2013), pp.~1607--1640.

\bibitem{LeonardiRigot}
{\sc G.~P. Leonardi and S.~Rigot}, {\em Isoperimetric sets on {C}arnot groups},
  Houston J. Math., 29 (2003), pp.~609--637.

\bibitem{Lott-Villani09}
{\sc J.~Lott and C.~Villani}, {\em Ricci curvature for metric-measure spaces
  via optimal transport}, Ann. of Math. (2), 169 (2009), pp.~903--991.

\bibitem{MaggiBook}
{\sc F.~Maggi}, {\em Sets of finite perimeter and geometric variational
  problems}, vol.~135 of Cambridge Studies in Advanced Mathematics, Cambridge
  University Press, Cambridge, 2012.

\bibitem{MIRANDA2003}
{\sc M.~{Miranda Jr.}}, {\em Functions of bounded variation on ``good'' metric
  spaces}, Journal de Math{\'e}matiques Pures et Appliqu{\'e}es, 82 (2003),
  pp.~975--1004.

\bibitem{morgan2003regularity}
{\sc F.~Morgan}, {\em Regularity of isoperimetric hypersurfaces in {R}iemannian
  manifolds}, Transactions of the American Mathematical Society, 355 (2003),
  pp.~5041--5052.

\bibitem{Pozuelo}
{\sc J.~Pozuelo}, {\em Existence of isoperimetric regions in sub-{F}insler
  nilpotent groups}.
\newblock Preprint, arXiv:2103.06630, 2021.

\bibitem{PratelliSaracco}
{\sc A.~Pratelli and G.~Saracco}, {\em The {$\varepsilon-\varepsilon^{\beta}$}
  property in the isoperimetric problem with double density, and the regularity
  of isoperimetric sets}, Adv. Nonlinear Stud., 20 (2020), pp.~539--555.

\bibitem{Rajala12}
{\sc T.~Rajala}, {\em Local {P}oincar\'e inequalities from stable curvature
  conditions on metric spaces}, Calc. Var. Partial Differential Equations, 44
  (2012), pp.~477--494.

\bibitem{Sturm06II}
{\sc K.-T. Sturm}, {\em On the geometry of metric measure spaces. {II}}, Acta
  Math., 196 (2006), pp.~133--177.

\bibitem{Xia05}
{\sc Q.~Xia}, {\em Regularity of minimizers of quasi perimeters with a volume
  constraint}, Interfaces Free Bound., 7 (2005), pp.~339--352.

\end{thebibliography}
\end{document}